\documentclass[12pt,a4paper]{article}

\usepackage{graphicx}
\usepackage[a4paper, hmargin={2.5cm,2.5cm},vmargin={3.3cm,3.3cm}]{geometry}

\usepackage[T1]{fontenc}
\usepackage[utf8]{inputenc}
\usepackage[english]{babel}

\usepackage{amsmath,amssymb,amsfonts}
\usepackage{amsthm,latexsym,xcolor}

\usepackage{cite}
\usepackage{paralist}
\setlength{\pltopsep}{.4em}
\setlength{\plpartopsep}{.2em}
\setlength{\plitemsep}{.1em}
\setlength{\plparsep}{.05em}

\usepackage{url}
\usepackage[bf,compact,pagestyles,small]{titlesec}
\titlespacing*{\section}{0pt}{14pt}{4pt}
\titlespacing*{\subsection}{0pt}{8pt}{3pt}

\usepackage{mathdots}

\usepackage{fancyhdr,lastpage}
\def\maketimestamp{\count255=\time
\divide\count255 by 60\relax
\edef\thetime{\the\count255:}%
\multiply\count255 by-60\relax
\advance\count255 by\time
\edef\thetime{\thetime\ifnum\count255<10 0\fi\the\count255}
\edef\thedate{\number\day-\ifcase\month\or Jan\or Feb\or Mar\or
             Apr\or May\or Jun\or Jul\or Aug\or Sep\or Oct\or
             Nov\or Dec\fi-\number\year}
\def\timstamp{\hbox to\hsize{\tt\hfil\thedate\hfil\thetime\hfil}}}
\maketimestamp
\pagestyle{fancy}
\fancyhf[L]{\small\textsl{M. Bownik, J. Lemvig} }
\fancyhf[R]{\small\textsl{Wavelets for non-expanding dilations}}

 \fancyfoot[L]{\scriptsize \tt date/time:\,\thedate/\thetime}
 \fancyfoot[C]{\small  \thepage{} of \pageref{LastPage}}
 \fancyfoot[R]{}
\fancypagestyle{plain}{%
\fancyhf{} 
\fancyfoot[C]{\small \thepage{} of \pageref{LastPage}}}

\numberwithin{equation}{section}  
\allowdisplaybreaks[4]

\newtheorem{theorem}{Theorem}[section]
\newtheorem{conjecture}{Conjecture}

\newtheorem{lemma}[theorem]{Lemma}
\newtheorem{proposition}[theorem]{Proposition}
\newtheorem{corollary}[theorem]{Corollary}

\theoremstyle{definition}
\newtheorem{definition}[theorem]{Definition} 

\theoremstyle{remark}
\newtheorem{remark}{Remark}


 %
\DeclareMathOperator{\supp}{supp} %
\DeclareMathOperator{\ft}{\mathcal{F}}

\DeclareMathOperator{\identity}{Id}

\newcommand{\dila}[1][A^j]{D_{\! #1}} 
\newcommand{\id}[1][]{\identity_{#1}}



\newcommand{\meas}[1]{\abs{#1}}
\newcommand{\card}[1]{\#\vert #1 \vert}
\newcommand{\cardbig}[1]{\#\bigl\vert #1 \bigr\vert}

\newcommand{\cardpropbig}[2]{\# \bigl\vert {#1}:{#2} \bigr\vert}


\newcommand{\eps}{\ensuremath{\varepsilon}}

\newcommand*{\numbersys}[1]{\ensuremath{\mathbb{#1}}}
\newcommand*{\B}{\mathbf{B}}
\newcommand*{\C}{\numbersys{C}}
\newcommand*{\R}{\numbersys{R}}
\newcommand*{\Rn}{\numbersys{R}^n}
\newcommand*{\Q}{\numbersys{Q}}

\newcommand*{\Z}{\numbersys{Z}}
\newcommand*{\Zn}{\numbersys{Z}^n}
\newcommand*{\N}{\numbersys{N}}

 %
 %
 %
 %
 %
 %
\newcommand{\abs}[1]{\ensuremath{\left\lvert#1\right\rvert}}
\newcommand{\abssmall}[1]{\ensuremath{\lvert#1\rvert}}

\newcommand{\norm}[2][]{\ensuremath{\left\lVert#2\right\rVert_{#1}}}
\newcommand{\enorm}[2][]{\ensuremath{\abs{#2}_{#1}}}

\newcommand{\normsmall}[2][]{\ensuremath{\lVert#2\rVert_{#1}}}
\newcommand{\innerprod}[3][]{\ensuremath{\left\langle #2,#3\right\rangle_{\! #1}}}

\newcommand{\set}[1]{\ensuremath{\left\lbrace{#1}\right\rbrace}}
\newcommand{\seq}[1]{\ensuremath{\left\lbrace{#1}\right\rbrace}}
\newcommand{\setprop}[2]{\ensuremath{\left\lbrace{#1} : {#2}\right\rbrace}}

\newcommand{\af}[1]{\ensuremath{\mathcal{A}\left(#1\right)}}

\newcommand{\lat}[1]{\ensuremath{#1}} 
 \newcommand{\LG}{\ensuremath\lat{\Gamma}}
 
 \newcommand{\mlat}{k}
 

\makeatletter
\renewcommand*\env@matrix[1][c]{\hskip -\arraycolsep
  \let\@ifnextchar\new@ifnextchar
  \array{*\c@MaxMatrixCols #1}}
\makeatother
\setcounter{MaxMatrixCols}{20}
\newlength{\bracewidth}

\usepackage{xspace}
\newcommand{\ie}{i.e.,\xspace} 



\newcommand{\spa}{\operatorname{span}}

\newcommand{\ch}{\mathbf 1}

\newcommand{\ga}{\gamma}
\newcommand{\Ga}{\Gamma}
\newcommand{\ve}{\varepsilon}

\newcommand{\la}{\lambda}
\newcommand{\La}{\Lambda}

\newcommand{\Nm}{\operatorname{Nm}}


\usepackage{hyperref}
\hypersetup{
 pdfview={FitH},
 pdfstartview={FitH},
 bookmarks={true},
 pdftoolbar={false},
 pdfmenubar={false},
 pdfauthor = {Marcin Bownik, Jakob Lemvig}, 
 pdftitle = {Wavelets for non-expanding dilations}, 
 pdfsubject = {wavelet theory},
 pdfkeywords = {non-expasive dilations},
 pdfcreator = {LaTeX with hyperref package},
 pdfproducer = {PDFlatex}}



\makeatletter
\def\blfootnote{\xdef\@thefnmark{}\@footnotetext} 
\def\subjclass{\xdef\@thefnmark{}\@footnotetext}
\long\def\symbolfootnote[#1]#2{\begingroup%
\def\thefootnote{\fnsymbol{footnote}}\footnote[#1]{#2}\endgroup} 
\if@titlepage
  \renewenvironment{abstract}{%
      \titlepage
      \null\vfil
      \@beginparpenalty\@lowpenalty
      \begin{center}%
        \bfseries \abstractname
        \@endparpenalty\@M
      \end{center}}%
     {\par\vfil\null\endtitlepage}
\else
  \renewenvironment{abstract}{%
      \if@twocolumn
        \section*{\abstractname}%
      \else
        \small
        \list{}{%
          \settowidth{\labelwidth}{\textbf{\abstractname:}}
          \setlength{\leftmargin}{50pt}
          \setlength{\rightmargin}{50pt}
          \setlength{\itemindent}{\labelwidth}
          \addtolength{\itemindent}{\labelsep}
        }
        \item[\textbf{\abstractname:}]

      \fi}
      {\if@twocolumn\else\endlist\fi}
\fi
\makeatother

\begin{document}
\title{Wavelets for non-expanding dilations 
\\
and the lattice counting estimate}

\author{Marcin Bownik\footnote{University of Oregon, Department of Mathematics, Eugene, OR 97403--1222, USA, E-mail: \url{mbownik@uoregon.edu}}\phantom{$\ast$}, Jakob Lemvig\footnote{Technical University of Denmark, Department of Applied Mathematics and Computer Science, Matematiktorvet 303, 2800 Kgs. Lyngby, Denmark, E-mail: \url{jakle@dtu.dk}}}

\date{\today}

\maketitle 
\blfootnote{2010 {\it Mathematics Subject Classification.} Primary: 42C40; Secondary 11H31, 52C22.}
\blfootnote{{\it Key words and phrases.} non-expanding dilation, wavelets, frames, local integrability condition, MSF wavelets}

\thispagestyle{plain}
\begin{abstract} We show that problems of existence and characterization of
  wavelets for non-expanding dilations are intimately connected with the
  geometry of numbers; more specifically, with a bound on the number of lattice
  points in balls dilated by the powers of a dilation matrix $A \in
  \mathrm{GL}(n,\R)$. This connection is not visible for the well-studied class
  of expanding dilations since the desired lattice counting estimate holds
  automatically. 
  We show that the lattice counting estimate holds for all dilations $A$ with
  $\abs{\det{A}}\ne 1$ and for almost every lattice $\Gamma$ with respect to the
  invariant probability measure on the set of lattices. As a consequence, we
  deduce the existence of minimally supported frequency (MSF) wavelets
  associated with such dilations for almost every choice of a lattice. Likewise,
  we show that MSF wavelets exist for all lattices and and almost every choice
  of a dilation $A$ with respect to the Haar measure on $\mathrm{GL}(n,\R)$.
\end{abstract}

\maketitle

\section{Introduction}
\label{sec:introduction}

A wavelet system is a collection of dilates and translates of a
function $\psi \in L^2(\R^n)$ given by
$\{\abs{\det{A}}^{j/2}\psi(A^j\cdot-\gamma)\}_{j\in \Z,\gamma \in
  \LG}$, where $A$ is an invertible $n \times n$ real matrix and $\LG$ is a
full rank lattice in $\R^n$.  The study of wavelets in higher dimensions
is generally restricted to the class of expanding dilations $A$, often
additionally assumed to preserve the integer lattice, $A\Z^n \subset
\Z^n$. Recall that a real $n\times n$ matrix is expanding, or
expansive, if all of its eigenvalues $\lambda$ satisfy
$|\lambda|>1$. This is due to the fact that many classical
results initially established for dyadic dilations $A=2\id$, first in
dimension $n=1$, and then in higher dimensions, often extend to the
setting of expanding dilations. This includes existence of several
classes of wavelets: well-localized wavelets in time and frequency,
minimally supported frequency (MSF) wavelets, Haar-type wavelets, and
Parseval wavelet frames. For example, Dai, Larson, and Speegle
\cite{DLS1, DLS2} have shown the existence of MSF wavelets for all
expanding dilations with real coefficients. In addition, wavelet
expansions associated with expanding dilations characterize many
classical function spaces such as: Lebesgue, Hardy, Lipschitz,
Sobolev, Besov, and Triebel-Lizorkin spaces.

In contrast, much less attention has been devoted to the study of wavelets
associated with general invertible dilations. Speegle in his thesis raised the
problem of existence of MSF wavelets for non-expanding dilations, and the first
example of a wavelet of this kind appeared in \cite{BS}. Laugesen \cite{La} and
Hern\'andez,  Labate, and Weiss \cite{HLW} then initiated a systematic study of
wavelets $\psi \in L^2(\R^n)$ in two distinct settings: amplifying dilations for
$\psi$ and dilations expanding on a subspace, respectively. In particular,
Hern\'andez et al.\ \cite{HLW} introduced an important concept, known as the
local integrability condition, that yields characterization results for Parseval
wavelet frames for non-expanding dilations, see \cite{MR2264324, Ku}. Soon
after, Speegle \cite{Sp} achieved breakthrough results giving necessary and
sufficient conditions for the existence of MSF wavelets for non-expanding
dilations. Based on Speegle's work, Ionascu and Wang \cite{IW} proved a beautiful result that gives a complete characterization of dilations admitting MSF wavelet in the dimension $n=2$. The corresponding problem in higher dimensions $n \ge 3$ remains open.

In this paper we show that problems of existence and characterization of
wavelets for non-expanding dilations are intimately connected with the geometry
of numbers and, more specifically, with the problem of bounding the number of
lattice points lying inside balls dilated by powers of a dilation matrix
$A$. The existence of a link between wavelets for non-expanding dilations and
diophantine approximation was already manifested in the papers of Speegle
\cite{Sp} and Ionascu and Wang \cite{IW}. However, this link is completely
invisible in the standard setting of expanding dilations, where the desired \emph{lattice counting estimate}, see Definition~\ref{def:lattice-counting}, holds automatically.

\begin{definition}
\label{def:lattice-counting}
Suppose $A$ is an $n\times n$ invertible matrix such that $\abs{\det{A}}>1$. 
We say that a pair $(A,\LG)$ satisfies
  the \emph{lattice counting estimate} if
\begin{equation}\label{eq:lc}
    \cardbig{ \LG \cap A^j(\B(0,r)) } \le C \max (1, \abs{\det{A}}^j)
    \qquad\text{for all }j\in \Z,
  \end{equation}
where $\B(0,r)$ denotes the open ball of radius $r>0$ centered at $0$. 
\end{definition}

We remark that if \eqref{eq:lc} holds for some $r=r_0>0$, then it holds for all $r>0$. This can be deduced from Lemma~\ref{thm:prog}, which guarantees the existence of large arithmetic progressions in the intersection of a lattice with a symmetric convex body. Of course, the constant $C$ in \eqref{eq:lc} will depend on $r$. 

In the context of wavelets we shall consider the lattice counting estimate in Fourier domain for the transpose dilation $B=A^T$ and the dual lattice
\[
\Ga^* = \{ x\in \R^n: \langle x, y \rangle \in \Z \quad\text{for }y\in \Ga\}
\]
that takes the form
\begin{equation}\label{eq:lca}
    \cardbig{ \LG^* \cap B^j(\B(0,r)) } \le C \max (1, \abs{\det{B}}^j)
\qquad\text{for all }j\in \Z.
  \end{equation}
  
We show that the lattice counting estimate \eqref{eq:lca}
characterizes the pairs of dilations and lattices $(B,\Gamma^*)$ for
which a rather technical local integrability condition is actually
equivalent with the much less technical integrability of the Calder\'on sum
$
\sum_{j\in \Z} |\hat \psi(B^{-j} \xi)|^2
$,
that is known to plays a key role in characterization of frame wavelets.

We also show that the lattice counting estimate holds not only for expanding
dilations, including those expanding on a subspace, it is even ubiquitous in a
probabilistic sense. That is, for any dilation $A$ with $\abs{\det{A}}>1$,
almost any choice of lattice $\Ga$ yields the lattice counting estimate. It also
holds for any fixed lattice $\Ga$ and almost every choice of a dilation
$A$. These results are shown using techniques introduced by Skriganov
\cite{Sk2} in his study of the logarithmically small errors in the lattice
problem for polyhedra. In particular, our arguments rely on diophantine
characteristic of a lattice, introduced by Skriganov \cite{Sk2}, and on several result in the geometry of numbers on intersection of convex bodies with lattices.

An interesting consequence of our ubiquity results is the existence of MSF wavelets for almost all random choices of dilations and lattices $(A,\Ga)$. That is, for any fixed lattice $\Ga$, which by standard arguments reduces to the key case $\Ga=\Z^n$, there exists an MSF wavelet for almost every choice of a dilation $A\in \mathrm{GL}(n,\R)$. Likewise, for any choice of a dilation $A\in \mathrm{GL}(n,\R)$, outside the exceptional case $\abs{\det{A}}=1$ for which MSF wavelets do not exist by the work of Larson, Schulz, Speegle, and Taylor \cite{MR2249311}, almost every (with respect to appropriate invariant measure on the set of all lattice) choice of a lattice $\Ga$ yields an MSF wavelet. Hence, MSF wavelets exists not only for all expanding dilations as was shown in \cite{DLS1}, but also for all invertible dilations $A$ and a generic choice of a lattice $\Ga$. Consequently, the pairs $(A,\Ga)$ that do not admit MSF wavelets form a thin and rather pathological exceptional set which is challenging to characterize beyond the known case \cite{IW} of the dimension $n=2$.

\section{Dilations expanding on a subspace}
\label{sec:expanding-subspace}

In this section we investigate the properties of the class of dilations that are \emph{expanding on a subspace}, that were introduced by Hern\'andez,  Labate, and Weiss \cite{HLW}. For this class of dilations wavelet characterization results, such as the characterization of Parseval wavelet frames, are known to hold, see \cite[Theorem 5.3]{HLW} and \cite[Theorem~1.1]{MR2264324}. Note that Guo and Labate \cite{MR2264324} corrected an error in the proof of
the characterization result from \cite{HLW} by redefining the class of 
dilation matrices expanding on a subspace. We give an explicit characterization of dilations that are expanding on a subspace. We also show that they correspond exactly to those dilations $A$ that satisfy the lattice counting estimate \eqref{eq:lc} for all possible choices of a lattice $\Ga$. 

Following Guo and Labate \cite{MR2264324} we adopt the following definition.

\begin{definition}
\label{def:exp-on-subspace}
  Given $A \in \mathrm{GL}(n,\R)$ and a non-zero linear subspace $F \subset \R^n$, we say that $A$ is expanding on $F$ if there exists a complementary (not necessarily orthogonal) linear
subspace $E$ of $\R^n$ with the following properties:
\begin{enumerate}[(i)]
\item $\R^n = F + E$ and $F \cap E = \{0\}$,
\item $F$ and $E$ are invariant under $A$, that is, $A(F) = F$ and $A(E) = E$, 
\item $\exists c \ge 1 \, \exists \gamma > 1 \, \forall j \ge 0: \enorm{A^jx} \ge (1/c) \gamma^j \enorm{x}$ for all $x \in F$,
\item  $\exists k>0 \, \forall j \ge 0: \enorm{A^jx} \ge k \enorm{x}$ for all $x \in E$.
\end{enumerate}
\end{definition}

\begin{remark}
\label{rem:E-and-F-from-exp-on-subs}
  Note that if $A$ expanding on a subspace, then all eigenvalues
  satisfy $\abs{\lambda}\ge 1$. Indeed, eigenvalues $\lambda$ of
  $A|_F$ must satisfy $|\lambda|>1$, whereas eigenvalues $\lambda$ of
  $A|_{E}$ satisfy $|\lambda| \ge 1$. Hence, we can take $E$ to be the
  (real) eigenspace associated with eigenvalues of modulus one; here
  we take the real and imaginary parts of eigenvectors associated with
  a complex conjugate pair of eigenvalues.
\end{remark}

Since $E$ is invariant under $A$, condition (iv) in
Definition~\ref{def:exp-on-subspace} is equivalent to the existence of $k>0$
such that, for all $j \ge 0$, we have $\enorm{x} \ge k \enorm{A^{-j} x}$ for all $x \in E$. This is equivalent to saying that the discrete time mapping $x \mapsto A^{-1}x, E \to E$, has a Lyapunov stable (sometimes called a marginally stable) fixed point at $x=0$. It is well-known that this discrete time mapping is Lyapunov stable if and only if all eigenvalues of $A^{-1}$ are no greater than one, and eigenvalues of modulus one have Jordan blocks of order one, \ie the algebraic and geometric multiplicity agree. We thereby obtain a simple characterization of the class of dilation matrices that are expansive on a subspace.  

\begin{proposition}
\label{thm:exp-subspace-charac}
  Let $A \in \mathrm{GL}(n,\R)$ be given. Then $A$ is expanding on a subspace if and only if
\begin{enumerate}[(i)]
\item all eigenvalues of $A$ have modulus greater than or equal to $1$, and
\item at least one eigenvalue has modulus strictly greater than $1$, and
\item all eigenvalues of modulus equal to $1$ have Jordan blocks of order one. 
\end{enumerate}  
\end{proposition}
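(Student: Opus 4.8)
The plan is to prove both directions by relating the defining properties (i)--(iv) of Definition~\ref{def:exp-on-subspace} to the Jordan structure of $A$, using the discussion immediately preceding the statement as the technical engine. For the forward direction, suppose $A$ is expanding on $F$ with complement $E$. Property (iii) forces every eigenvalue of $A|_F$ to have modulus $>1$, and property (iv), via the equivalence noted in the paragraph before the proposition (Lyapunov stability of $x\mapsto A^{-1}x$ on $E$), forces every eigenvalue of $A|_E$ to have modulus $\le 1$ with all modulus-one eigenvalues carrying only order-one Jordan blocks. Since $\R^n = F\oplus E$ is an $A$-invariant decomposition, the spectrum of $A$ is the disjoint union of the two spectra, so all eigenvalues have modulus $\ge 1$; since $F\ne\{0\}$, at least one has modulus $>1$; and any modulus-one eigenvalue must live in $E$, hence has an order-one Jordan block. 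This gives (i)--(iii) of the proposition.

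For the converse, assume (i)--(iii) of the proposition. I would build $E$ and $F$ from the real Jordan (generalized-eigenspace) decomposition of $A$: let $F$ be the sum of the real generalized eigenspaces associated with eigenvalues of modulus $>1$, and let $E$ be the sum of those associated with eigenvalues of modulus exactly $1$ — taking real and imaginary parts of eigenvectors for non-real conjugate pairs, exactly as in Remark~\ref{rem:E-and-F-from-exp-on-subs}. By construction $\R^n = F\oplus E$, both subspaces are $A$-invariant, and $F\ne\{0\}$ by hypothesis (ii), so (i) and (ii) of Definition~\ref{def:exp-on-subspace} hold. For (iii): $A|_F$ has all eigenvalues of modulus $>1$, so $(A|_F)^{-1}$ has spectral radius $<1$, whence $\norm{(A|_F)^{-j}} \to 0$ geometrically; choosing $\gamma$ strictly between $1$ and $\min\{|\lambda| : \lambda \in \sigma(A|_F)\}$ and $c$ large enough absorbs the polynomial Jordan-block factors, yielding $\enorm{A^j x} \ge (1/c)\gamma^j\enorm{x}$ on $F$. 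For (iv): by hypothesis (iii) of the proposition, $A|_E$ has all modulus-one eigenvalues with order-one Jordan blocks and no other eigenvalues, so $A^{-1}|_E$ is power-bounded, \ie $\sup_{j\ge 0}\norm{A^{-j}|_E} =: 1/k < \infty$, which is precisely condition (iv) in the equivalent form $\enorm{x}\ge k\enorm{A^{-j}x}$ on $E$ recorded before the proposition.

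The two technical facts I am leaning on — that a matrix is power-bounded iff its spectral radius is $\le 1$ and all peripheral eigenvalues have trivial Jordan blocks, and that spectral radius $<1$ implies geometric decay of the powers with a polynomial correction — are classical and already invoked in the paragraph preceding the proposition, so I would cite that discussion rather than reprove them. The one point needing a little care is that the norm estimates (iii)--(iv) are asked for with a \emph{fixed} Euclidean norm $\enorm{\cdot}$ on all of $\R^n$, not an adapted norm; passing from an operator-norm bound on $A^{\pm j}$ restricted to an invariant subspace to the stated pointwise inequalities is immediate, but one should note that the constants $c$, $\gamma$, $k$ depend only on $A$ (and the chosen decomposition), not on $x$.

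I expect the main obstacle to be purely organizational rather than deep: making sure the real Jordan decomposition is set up cleanly for complex conjugate pairs of eigenvalues (so that $E$ and $F$ are genuine real subspaces and the invariance is manifest), and being careful that ``Jordan block of order one'' is interpreted as the algebraic and geometric multiplicities agreeing for that eigenvalue, matching the phrasing in the excerpt. No single step should be hard once the decomposition is fixed.
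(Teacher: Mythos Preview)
Your approach is correct and matches the paper's: both directions are driven by the Lyapunov\slash power-boundedness criterion stated in the paragraph preceding the proposition, with the real Jordan decomposition supplying the canonical choice of $E$ and $F$. The paper's ``only if'' argument makes the Jordan-block obstruction explicit by exhibiting an unbounded orbit $\{A^{-j}x\}$ when a modulus-one eigenvalue has a nontrivial Jordan block, while you simply invoke the power-boundedness criterion; these are the same content. Your write-up of the ``if'' direction is more detailed than the paper's, which just calls it a simple verification.

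One slip to fix: you write that property~(iv) ``forces every eigenvalue of $A|_E$ to have modulus $\le 1$.'' The Lyapunov criterion applies to $A^{-1}|_E$, so the conclusion is that eigenvalues of $A^{-1}|_E$ have modulus $\le 1$, i.e., eigenvalues of $A|_E$ have modulus $\ge 1$ (with modulus-one eigenvalues having trivial Jordan blocks). Your very next sentence (``so all eigenvalues have modulus $\ge 1$'') shows you meant the correct inequality; just correct the sign so the chain of implications reads cleanly.
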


\begin{proof}
 To prove the ``only if''-direction, assume towards a contradiction that the eigenvalue $\lambda$ of $A^{-1}$, $\abs{\lambda}=1$, has an algebraic multiplicity strictly greater than its geometric multiplicity. Let $v_1$ be an eigenvector and $v_2$ a generalized eigenvector of $A^{-1}$ associated with $\lambda$ such that
\[ 
A^{-1}v_1 = \lambda v_1 \qquad \text{and} \qquad A^{-1} v_2=\lambda v_2 + v_1.
\] 
Assume that $\lambda$ is non-real; the case $\lambda=\pm 1$ can be handled similarly. Then $\overline{\lambda}$ is also an eigenvalue with eigenvector $\overline{v_1}$ and generalized eigenvector $\overline{v_2}$. Take $x = v_2 + \overline{v_2} = 2 \Re{v_2} \in \R^n$. By Remark~\ref{rem:E-and-F-from-exp-on-subs}, we can take $E$ to be the span of the basis vectors associated with eigenvalues of modulus one from the real Jordan form of $A^{-1}$. Then $x \in E$ and
\[ A^{-j} x = 2 j \Re{(\lambda^{j-1}v_1)} + 2 \Re{(\lambda^j v_2)} \]
We see that the orbit $\set{A^{-j}x}_{j=1}^\infty \subset E$ is unbounded, hence (iv) in Definition~\ref{def:exp-on-subspace} cannot hold. Thus, all eigenvalues $\la$ of $A$ with $|\lambda|=1$ have Jordan blocks of order one. Moreover, at least one eigenvalue $\lambda$ of $A$ satisfies $|\la|>1$ in light of (iii). The proof of the ``if''-direction is a simple verification of properties (i)-(iv) in Definition~\ref{def:exp-on-subspace}.
\end{proof}

It was shown in \cite{MR2264324} that eigenvalues alone do not give the complete picture of when the wavelet characterization results hold. The interaction of a dilation $A$ and a lattice $\Gamma$ has to be taken into account to get the more optimal result. In fact, the following result combining \cite[Lemma 3.2]{MR2264324} and \cite[Lemma 3.3]{MR2264324} motivates the definition of lattice counting estimate \eqref{eq:lc}, see also \cite[Lemma 2.8]{MR2746669}. 

\begin{lemma}\label{explce}
  Let $A\in \mathrm{GL}(n,\R)$ be expanding on a subspace of $\R^n$, and let $r>0$. Then $(A,\LG)$ satisfies the lattice counting estimate~(\ref{eq:lc}) for any full-rank lattice $\LG \subset \R^n$. 
\end{lemma}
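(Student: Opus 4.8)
The plan is to establish \eqref{eq:lc} by treating non-positive and positive powers $j$ separately. First note that by Proposition~\ref{thm:exp-subspace-charac} all eigenvalues of $A$ have modulus at least one and at least one has modulus greater than one, so $\abs{\det A}>1$; hence $\max(1,\abs{\det A}^j)=1$ for $j\le 0$ and $=\abs{\det A}^j$ for $j>0$, and the estimate is meaningful. Fix once and for all the $A$-invariant decomposition $\Rn=F+E$ with $F\cap E=\{0\}$ supplied by Definition~\ref{def:exp-on-subspace}, and let $P_F,P_E$ be the (oblique, but bounded since $\Rn$ is finite-dimensional) projections onto $F$ along $E$ and onto $E$ along $F$; put $C_P=\max(\norm{P_F},\norm{P_E})$ and keep the constants $c\ge 1$, $\gamma>1$, $k>0$ of conditions (iii)--(iv).

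\emph{The case $j\le 0$.} Write $m=-j\ge 0$, so that $v\in\LG\cap A^j(\B(0,r))$ means exactly $\norm{A^m v}<r$. Because $F$ and $E$ are $A$-invariant, applying $A^m$ to $v=P_Fv+P_Ev$ again produces the $F$--$E$ splitting, i.e.\ $P_F(A^m v)=A^m P_F v$ and $P_E(A^m v)=A^m P_E v$; hence $\norm{A^m P_F v}=\norm{P_F(A^m v)}\le\norm{P_F}\norm{A^m v}<C_P r$, and likewise $\norm{A^m P_E v}<C_P r$. Conditions (iii) and (iv) of Definition~\ref{def:exp-on-subspace} then give $\norm{P_F v}\le c\gamma^{-m}\norm{A^m P_F v}<c\gamma^{-m}C_P r\le cC_P r$ and $\norm{P_E v}\le (1/k)\norm{A^m P_E v}<C_P r/k$, so $\norm{v}\le\norm{P_F v}+\norm{P_E v}<cC_P r+C_P r/k=:R$, a bound independent of $m$. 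Therefore $\LG\cap A^j(\B(0,r))\subset\LG\cap\B(0,R+1)$ for every $j\le 0$, and the right-hand set is finite because $\LG$ is discrete; so \eqref{eq:lc} holds for all $j\le 0$ with constant $\card{\LG\cap\B(0,R+1)}$.

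\emph{The case $j>0$.} Here I would combine power-boundedness of $A^{-1}$ with a packing count. By Proposition~\ref{thm:exp-subspace-charac} every eigenvalue of $A^{-1}$ has modulus at most one and those of modulus one are semisimple, which is precisely the classical condition ensuring $M:=\sup_{j\ge 0}\norm{A^{-j}}<\infty$. Fix a bounded measurable fundamental domain $\mathcal F$ of $\LG$ with $\mathcal F\subset\B(0,d)$ and $\vol{\mathcal F}=\Delta$. The translates $\{v+\mathcal F\}_{v\in\LG}$ tile $\Rn$, and if $v\in\LG\cap A^j(\B(0,r))$ then $v+\mathcal F\subset A^j(\B(0,r))+\B(0,d)$; disjointness gives $\cardbig{\LG\cap A^j(\B(0,r))}\cdot\Delta\le\vol{A^j(\B(0,r))+\B(0,d)}$. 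Since $A^j(\B(0,r))\supset\B(0,r/\norm{A^{-j}})\supset\B(0,r/M)$, the ball $\B(0,d)$ is contained in $(dM/r)\,A^j(\B(0,r))$, and convexity of the ellipsoid $A^j(\B(0,r))$ (which contains $0$) yields $A^j(\B(0,r))+\B(0,d)\subset(1+dM/r)\,A^j(\B(0,r))$. Taking volumes, $\cardbig{\LG\cap A^j(\B(0,r))}\le\Delta^{-1}(1+dM/r)^n\vol{\B(0,r)}\,\abs{\det A}^j$, which is the required bound. The lemma then follows by taking $C$ to be the larger of the two constants produced.

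The argument is essentially the one behind \cite[Lemmas 3.2 and 3.3]{MR2264324}, and there is no serious obstacle; the two points that need care are that for $j>0$ one genuinely needs $A^{-1}$ to be power-bounded --- this is exactly where semisimplicity of the unit-modulus eigenvalues (Proposition~\ref{thm:exp-subspace-charac}(iii)) enters, since otherwise the ellipsoids $A^j(\B(0,r))$ would become arbitrarily thin in some directions and the crude volume count would blow up --- and that for $j\le 0$ the subspaces $F$ and $E$ need not be orthogonal, so one must work with the norms of the oblique projections $P_F,P_E$ rather than with orthogonal ones. Everything else is routine geometry of numbers.
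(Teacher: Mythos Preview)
Your proof is correct and is precisely the argument of \cite[Lemmas 3.2 and 3.3]{MR2264324} that the paper cites in lieu of a proof; the paper itself does not supply an independent argument here. Both the $j\le 0$ case (uniform boundedness of the ellipsoids via conditions (iii)--(iv)) and the $j>0$ case (power-boundedness of $A^{-1}$ plus the fundamental-domain packing count) match the cited source, and your remarks about the oblique projections and the role of semisimplicity are exactly the points that need attention.
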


We finish this section by showing that the converse of Lemma~\ref{explce} holds. Hence, the class of dilations expanding on a subspace consists precisely of those dilations for which the  lattice counting estimate holds for every choice of a lattice.

\begin{theorem}\label{lceexp} 
Suppose that $A\in \mathrm{GL}(n,\R)$ and $\abs{\det{A}}>1$. Then, $(A,\LG)$ satisfies the lattice counting estimate~(\ref{eq:lc}) for \emph{all} full-rank lattices $\LG \subset \R^n$ if and only if $A$ is expanding on a subspace.
\end{theorem}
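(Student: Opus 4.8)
The ``if'' direction is Lemma~\ref{explce}, so the plan is to prove the converse by contraposition: assuming $\abs{\det A}>1$ and that $A$ is \emph{not} expanding on a subspace, I want to exhibit a single full-rank lattice $\LG\subset\R^n$ violating \eqref{eq:lc}. Since $\abs{\det A}>1$ already forces an eigenvalue of modulus $>1$, Proposition~\ref{thm:exp-subspace-charac} leaves two possibilities: (A) $A$ has an eigenvalue $\mu$ with $\abs{\mu}<1$; or (B) $A$ has an eigenvalue $\lambda_0$ with $\abs{\lambda_0}=1$ whose Jordan blocks are not all of order one. In each case I will produce a nonzero $A$-invariant subspace $V\subset\R^n$ (of dimension at most $4$) and a full-rank lattice $\Lambda$ in $V$ such that, along some sequence $m_k\to+\infty$,
\begin{equation*}
  \cardbig{\Lambda\cap A^{-m_k}(\B(0,r)\cap V)}\ \longrightarrow\ \infty .
\end{equation*}
Completing a $\Z$-basis of $\Lambda$ to a basis of $\R^n$ and taking the $\Z$-span gives a full-rank lattice $\LG$ with $\LG\cap V=\Lambda$; since $V$ is $A$-invariant we have $A^{-m_k}(\B(0,r)\cap V)\subseteq A^{-m_k}\B(0,r)$, hence $\cardbig{\LG\cap A^{-m_k}\B(0,r)}\ge\cardbig{\Lambda\cap A^{-m_k}(\B(0,r)\cap V)}\to\infty$, whereas the right-hand side of \eqref{eq:lc} at $j=-m_k$ equals $C\max(1,\abs{\det A}^{-m_k})=C$ because $\abs{\det A}>1$. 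This contradicts \eqref{eq:lc}, as required.

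Case~(A) is easy and uses no arithmetic. Take $V$ to be the eigenline of $\mu$ if $\mu$ is real, and the real plane spanned by the real and imaginary parts of a complex eigenvector if $\mu$ is not real. Then $A^{-m}|_V=\abs{\mu}^{-m}O_m$ with $O_m$ invertible and $\norm{O_m},\norm{O_m^{-1}}$ bounded independently of $m$, so $A^{-m}(\B(0,r)\cap V)$ contains a ball inside $V$ of radius $\asymp\abs{\mu}^{-m}\to\infty$; any full-rank $\Lambda\subset V$ has $\to\infty$ points there (for $\dim V=1$ these are the integer multiples of a generator, for $\dim V=2$ one gets $\asymp\abs{\mu}^{-2m}$ of them). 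With $m_k$ any sequence $\to+\infty$ this realizes the mechanism above.

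Case~(B) carries the real content. First shrink $V$: the $A$-invariant span of an eigenvector of $\lambda_0$ together with one associated generalized eigenvector is a single Jordan block of order $2$, so we may assume $\dim V=2$ (if $\lambda_0=\pm1$), identified with $\R^2$, or $\dim V=4$ (if $\lambda_0$ is non-real), identified with $\C^2$ on which $A|_V$ acts $\C$-linearly as an order-$2$ Jordan block. If $\lambda_0$ is a root of unity, $\lambda_0^N=1$, then $A^N$ has the eigenvalue $1$ with a Jordan block of order $\ge 2$ and $\abs{\det A^N}>1$, so running the construction for $A^N$ already produces a lattice bad for $A$; thus we may assume $\lambda_0=\pm1$ or $\lambda_0$ is not a root of unity. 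In a Jordan basis of $V$, $A^m|_V$ equals $\lambda_0^m$ times the shear $(v_1,v_2)\mapsto(v_1+\lambda_0^{-1}m\,v_2,\,v_2)$, and since $\abs{\lambda_0}=1$ this gives $\norm{A^m v}^2=\abs{v_1+\lambda_0^{-1}m\,v_2}^2+\abs{v_2}^2$ in coordinates. So the task is to find a full-rank lattice $\Lambda$ in $V$ containing, for infinitely many integers $m\to+\infty$, a vector $(v_1,v_2)$ with both $\abs{v_2}$ and $\abs{v_1+\lambda_0^{-1}m\,v_2}$ as small as we please: then the $\asymp r/\delta_k$ distinct integer multiples of such a vector $v^{(k)}$ (with $\delta_k:=\norm{A^{m_k}v^{(k)}}\to0$) lie in $A^{-m_k}(\B(0,r)\cap V)$, giving $\cardbig{\Lambda\cap A^{-m_k}(\B(0,r)\cap V)}\gtrsim r/\delta_k\to\infty$. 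For $\lambda_0=\pm1$ this is elementary: take $\Lambda=\Z e_2+\Z(e_1+\beta e_2)$ with $\beta$ irrational, so $\Lambda\cap\R e_1=\{0\}$ and lattice vectors have coordinates $(q,\,q\beta-p)$, $p,q\in\Z$; the continued-fraction convergents $p_k/q_k$ of $\beta$, taken from the side for which the resulting $m_k$ are positive, yield $v^{(k)}=(q_k,\epsilon_k)\in\Lambda$ with $\epsilon_k\to0$ and $m_k$ (the nearest integer to $-\lambda_0 q_k/\epsilon_k$) tending to $+\infty$, and then $\norm{A^{m_k}v^{(k)}}\asymp\abs{\epsilon_k}\to0$.

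The remaining subcase — $\lambda_0=e^{i\theta_0}$ non-real and not a root of unity — is the main obstacle. Now $A^m|_V$ acts on $V\cong\C^2$ as rotation by $m\theta_0$ composed with the complex shear above, and the required smallness of $\norm{A^m v}$ demands a \emph{twisted} simultaneous approximation: lattice vectors $(v_1,v_2)$ with $\abs{v_2}$ arbitrarily small whose ``slope'' $\lambda_0^{-1}v_2\overline{v_1}$ is arbitrarily close to the real axis. Equivalently, one needs a lattice $\Lambda$ in $V$ whose forward orbit $\{A^m|_V\cdot\Lambda:m\ge0\}$ in the space of lattices of fixed covolume is \emph{not} relatively compact; Mahler's compactness criterion then produces $m_k\to+\infty$ for which $A^{m_k}|_V\cdot\Lambda$ has nonzero vectors of norm $\to0$, i.e.\ $v^{(k)}\in\Lambda$ with $\norm{A^{m_k}v^{(k)}}\to0$, closing the argument as before. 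Almost every lattice in $V$ has such an escaping orbit (consistent with the paper's ubiquity results), but pinning down an explicit one — equivalently, checking directly that the orbit of a concrete $\Lambda$ under the twisted unipotent $A^m|_V$ leaves every compact set — is the one genuinely delicate point; Case~(A), the case $\lambda_0=\pm1$, and the root-of-unity reduction are bookkeeping once the common mechanism of the first paragraph is set up.
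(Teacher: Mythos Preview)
Your setup, Case~(A), and the $\lambda_0=\pm1$ subcase of Case~(B) are correct and essentially match the paper's argument. The gap is exactly where you flag it: the non-real, non-root-of-unity subcase. You do not actually produce a lattice there; you only assert that almost every lattice in $V$ has an unbounded forward $A|_V$-orbit in the space of lattices and appeal to Mahler's criterion, but you neither prove that ``almost every'' claim nor exhibit a concrete $\Lambda$. (Your reference to the paper's ubiquity results points the wrong way: those show the lattice counting estimate \emph{holds} for a.e.\ lattice in $\R^n$, which says nothing about orbits of lattices inside the $4$-dimensional piece $V$ escaping to infinity.) So the argument, as written, is incomplete.

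The paper's resolution of this subcase is far simpler than the twisted-Diophantine/Mahler route you envisage, and it makes your root-of-unity reduction unnecessary as well. The key is a geometric observation: with $V\cong\R^2\times\R^2$,
\[
(A|_V)^j=\begin{bmatrix}R(\theta j)&jR(\theta(j-1))\\ \mathbf 0&R(\theta j)\end{bmatrix}
\]
factors as a block-diagonal rotation times the \emph{real} block shear $\begin{bmatrix}\mathbf I& j\mathbf I\\ \mathbf 0&\mathbf I\end{bmatrix}$ times another block-diagonal rotation. Since the polydisc $\mathbf B_2\times\mathbf B_2\subset\R^4$ is invariant under block-diagonal rotations, the image $(A|_V)^j(\mathbf B_2\times\mathbf B_2)$ coincides with the image of the same set under the real block shear---regardless of $\theta$. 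After permuting the four coordinates, that block shear splits as a direct sum of two copies of the $2\times2$ shear $\bigl(\begin{smallmatrix}1&j\\0&1\end{smallmatrix}\bigr)$, so the \emph{same} irrational-slope lattice you already built for $\lambda_0=1$, used in one copy and extended trivially, yields $\sup_{j<0}\#|\Gamma\cap(A|_V)^j(\mathbf B(0,r))|=\infty$. No Diophantine input beyond a single irrational is required, and the ``twist'' by $\lambda_0$ disappears because you are intersecting with a rotationally symmetric body rather than tracking individual orbits.
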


\begin{proof} 
Lemma~\ref{explce} shows the ``if''-implication. To show the converse implication, assume that $(A,\Ga)$ satisfies \eqref{eq:lc} for all lattices $\Ga$. We will show that the properties (i)--(iii) in Proposition~\ref{thm:exp-subspace-charac} hold. 

On the contrary, suppose that (i) fails, i.e., there exists an eigenvalue $\la$ of $A$ such that $|\lambda|<1$. Then, there exists $1$-dimensional eigenspace $V$ if $\lambda$ is real, or $2$-dimensional invariant space $V$ corresponding to a pair of  complex conjugate eigenvalues $\lambda$ and $\overline \lambda$. In either case, we have $|Av|=|\lambda| |v|$ for all $v\in V$. Choose a full rank lattice $\Gamma$ in $V$ and extend it to a full rank lattice in $\R^n$. Then, 
\[
\#|\Gamma \cap A^j(\mathbf B(0,r)) | \ge \# |\Gamma \cap  A^j(V \cap \mathbf B(0,r))| =\#|\Gamma \cap V \cap \mathbf B(0,|\lambda|^jr)| \to \infty \qquad\text{as }j\to -\infty.
\]
This contradicts \eqref{eq:lc}. Hence, (i) holds and so does (ii) since $\abs{\det{A}}>1$. 

Finally, suppose that (iii) fails. That is, there exists a Jordan block of order $\ge 2$ corresponding to an eigenvalue $|\lambda|=1$.
If $\lambda$ is real, then $\lambda=\pm 1$ and there exists a $2$-dimensional invariant subspace $V$ such that $A|_V$ has a matrix representation $\begin{bmatrix} \lambda & 1 \\ 0 & \lambda \end{bmatrix}$. For simplicity assume $\lambda=1$. The case $\lambda=-1$ is similar. Then,
\begin{equation}\label{sh3}
(A|_V)^j = \begin{bmatrix} 1 & j \\ 0 & 1 \end{bmatrix}
\end{equation}
Choose $\alpha \in \R \setminus \Q$ and define a lattice $\Gamma$ in $V$ of the form 
$\Gamma = \Z(0,1)  + \Z(1,\alpha)$.
Then, for any $N\in \N$, we can find $\gamma = ( \gamma_1 , \gamma_2 ) \in \Gamma$ such that $\ga_1<0$ and $0<\gamma_2 <1/N$. Let $j=\lfloor \gamma_1/\gamma_2 \rfloor <0 $. Since the image of the unit square $[0,1]^2$ under \eqref{sh3} is a parallelogram with vertices $(0,0)$, $(1,0)$, $(j,1)$ and $(j+1,1)$, it contains line segments going through the origin with slopes $m$ such that $1/j\le   m \le 1/(j+1)$. In particular, the slope $m$ of the line $\R(\gamma_1,\gamma_2)$ lies in this range. Since $0<\gamma_2<1/N$, at least $N$ points of the lattice $\Gamma$ lie in the above parallelogram. Thus, 
\begin{equation}\label{sh4}
\sup_{j<0} \# | \Gamma \cap (A|_V)^j(\mathbf B(0,r))| = \infty.
\end{equation}
Extending the rank 2 lattice $\Gamma$ to a full rank lattice yields a pair $(A,\Gamma)$ that fails the lattice counting estimate \eqref{eq:lc} for $j<0$, which is a contradiction.

If $\lambda= e^{i\theta}$ is not real, then there exists a $4$-dimensional invariant subspace $V$ such that $A|_V$ has a matrix representation 
\[
\begin{bmatrix} R(\theta) & \mathbf I \\ \mathbf 0 & R(\theta)
\end{bmatrix},
\qquad\text{where }
R(\theta) = \begin{bmatrix} \cos \theta & \sin \theta \\ -\sin\theta & \cos \theta
\end{bmatrix}.
\]
Here, $\mathbf I$ and $\mathbf 0$ are the $2\times 2$ identity matrix
and the $2\times 2$ zero matrix, respectively. Observe that
\begin{equation}\label{sh5}
(A|_V)^j = \begin{bmatrix} R(\theta j) & j R(\theta (j-1)) \\ \mathbf 0 & R(\theta j) \end{bmatrix}.
\end{equation}
Let $\mathbf B_2$ be the unit ball in $\R^2$. Since $R(\theta)$ is a rotation matrix, the image of $\mathbf B_2 \times \mathbf B_2$ under the matrix $\eqref{sh5}$ is the same as the image of the same set under the matrix
\[
\begin{bmatrix} \mathbf I & j \mathbf I \\ \mathbf 0 & \mathbf I \end{bmatrix}.
\]
By permuting the basis elements, the above matrix consists of two blocks of the form \eqref{sh3}. By the same argument as in the real case we can find a lattice $\Gamma$ satisfying \eqref{sh4}. Again this contradicts \eqref{eq:lc} and completes the proof of Theorem~\ref{lceexp}.
\end{proof}

\section{The local integrability condition}
\label{sec:local-integr-cond}

In this section we investigate the local integrability condition (LIC)
that was originally introduced by Hern\'andez, Labate, and Weiss
\cite{HLW}. While this condition can be studied in full generality of
generalized shift-invariant (GSI) system, we restrict our attention to
wavelet systems. We show that the integrability of the Calder\'on
formula implies the LIC precisely for  pairs $(B,\Ga^*)$ satisfying the lattice counting estimate \eqref{eq:lca}. As a consequence, we extend characterization results for Parseval and dual frames to this more general (than expanding on a subspace) setting.

\begin{definition}
Let $E$ be a proper subspace of $\R^n$. Consider the following dense subspace of $L^2(\Rn)$,
\begin{equation}
\label{eq:def-D}
\mathcal{D}_E  = \setprop{f \in L^2 (\Rn)}{\hat f \in L^\infty(\Rn) \text{ and }
\overline{\supp \hat f} \subset \Rn \setminus E \text{ is compact}}.
\end{equation}

Let $\Psi = \set{\psi_1,\dots, \psi_L} \subset L^2(\R^n)$, $A\in \mathrm{GL}(n,\R)$ and $\Ga$ is a full-rank lattice. The corresponding wavelet system associated with the pair $(A,\Ga)$ is defined as
\[
\mathcal A(\Psi, A,\Gamma)= \{D_{A^j}T_\gamma \psi_\ell: j\in \Z,\gamma \in \LG,\ell=1,\dots,L\}
\]
where $D_{A}f(x)=|\det A|^{1/2} f(Ax)$ is the dilation operator and $T_\gamma f(x)=f(x-\gamma)$ is the translation operator. We say $\mathcal A(\Psi,A,\Ga)$ satisfies the local integrability condition (LIC) if
 \begin{align}
    L(f)&= \sum_{l=1}^L\sum_{j \in \Z} \sum_{\mlat \in \LG^\ast} \int_{\supp
      \hat f} \abssmall{\hat f(\xi+B^j \mlat)}^2 \, \abssmall{\det{A}}^j
    \abs{\ft \dila \psi_l(\xi)}^2\, \mathrm{d}\xi \nonumber   \\ 
&=\sum_{l=1}^L\sum_{j \in \Z} \sum_{\mlat \in \LG^\ast} \int_{\supp
      \hat f} \abssmall{\hat f(\xi+B^j \mlat)}^2 \, \abssmall{\hat
      \psi_l(B^{-j}\xi)}^2\, \mathrm{d}\xi < \infty \quad \text{for all
      $f \in \mathcal{D}_E$.}  \label{eq:af-LIC}
   \end{align}
Here, the Fourier transform is defined for $f \in L^1(\Rn)$ by 
\[
\ft
f(\xi)=\hat f(\xi) = \int_{\Rn} f(x)\mathrm{e}^{-2 \pi i
  \innerprod{\xi}{x}} \mathrm{d}x
  \]
with the usual extension to $L^2(\Rn)$.
\end{definition}

\subsection{The Calder\'o{}n condition}
\label{sec:calderon-condition}

 The following fact shows that the local integrability condition (\ref{eq:af-LIC}) for the wavelet system $\af{\Psi,A,\Ga}$ implies the local integrability
of the Calder\'o{}n sum (\ref{eq:L1-local-int}). 
\begin{lemma} \label{thm:LIC-implies-L1-loc}
 Let $A \in \mathrm{GL}(n,\R)$ and let $E$ be a proper subspace of $\R^n$. Suppose $\Psi = \set{\psi_1,\dots, \psi_L} \subset
  L^2(\Rn)$ satisfies the LIC (\ref{eq:af-LIC}) for $f \in \mathcal{D}_E$. Then 
  \begin{equation}
\sum_{l=1}^L \sum_{j\in\Z} \abs{\hat \psi_l(B^{-j}\xi)}^2  \in
L^1_{\text{loc}}(\Rn \setminus E). \label{eq:L1-local-int} \\
\end{equation}
\end{lemma}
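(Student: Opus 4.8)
The plan is to extract the Calderón sum from the LIC by choosing a suitable test function $f \in \mathcal{D}_E$ and keeping only the "diagonal" term $\mlat = 0$ in the sum over $\LG^*$. First I would fix a compact set $K \subset \R^n \setminus E$ on which we wish to prove local integrability of $\sum_{l,j} \abs{\hat\psi_l(B^{-j}\xi)}^2$; by an exhaustion argument it suffices to treat $K$ contained in a fixed open set $U$ with $\overline{U}$ compact and $\overline{U} \subset \R^n\setminus E$. Shrinking $U$ if necessary, I would choose $U$ so small that the translates $U + B^j\mlat$, for $j\in\Z$ and $\mlat\in\LG^*\setminus\{0\}$, are "mostly disjoint" from $U$ — more precisely, I would take $f = \charfct{U}$ (or a bump function supported in $U$, with $\hat f$ replaced by $f$ itself in the role of the Fourier side, i.e. I would actually apply the LIC to the function $g$ with $\hat g = \charfct{U}$). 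Then $f = \hat g \in L^\infty$, $\overline{\supp\hat g} = \overline{U}$ is compact and disjoint from $E$, so $g \in \mathcal{D}_E$ and the LIC applies.

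With this choice, $\hat g = \charfct{U}$, so $\abs{\hat g(\xi+B^j\mlat)}^2 \abs{\hat g(\xi)}^2$ (note $\supp \hat g = U$ provides the outer integration domain) equals $\charfct{U}(\xi)\charfct{U}(\xi + B^j\mlat)$. Dropping all terms with $\mlat \ne 0$ — which are nonnegative — and keeping $\mlat = 0$, the left side $L(g)$ of \eqref{eq:af-LIC} dominates
\[
\sum_{l=1}^L \sum_{j\in\Z} \int_U \abs{\hat\psi_l(B^{-j}\xi)}^2 \, \mathrm{d}\xi
= \int_U \sum_{l=1}^L\sum_{j\in\Z} \abs{\hat\psi_l(B^{-j}\xi)}^2\,\mathrm{d}\xi,
\]
where the interchange of sum and integral is justified by Tonelli's theorem since all terms are nonnegative. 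Since $L(g) < \infty$ by hypothesis, this shows $\sum_{l,j}\abs{\hat\psi_l(B^{-j}\cdot)}^2 \in L^1(U)$, hence is in $L^1$ on every compact subset of $\R^n\setminus E$, which is exactly \eqref{eq:L1-local-int}.

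The main obstacle is bookkeeping rather than depth: I must make sure the chosen $g$ genuinely lies in $\mathcal{D}_E$ (compact support of $\hat g$ bounded away from $E$ — automatic once $\overline U$ is compact and $\overline U\cap E=\emptyset$), and I must be careful that discarding the $\mlat\ne 0$ terms is legitimate, which it is precisely because the integrand $\abs{\hat g(\xi+B^j\mlat)}^2\abs{\hat\psi_l(B^{-j}\xi)}^2\abs{\det A}^j$ is pointwise nonnegative, so no convergence issue arises and no use of the lattice counting estimate is needed here. One subtlety worth a remark: the two equivalent forms of $L(f)$ in \eqref{eq:af-LIC} use $\abs{\det A}^j \abs{\ft D_{A^j}\psi_l(\xi)}^2 = \abs{\hat\psi_l(B^{-j}\xi)}^2$, and it is the second form that directly yields the Calderón sum, so I would work with that form throughout. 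Finally, the reduction from arbitrary compact $K\subset\R^n\setminus E$ to a single small ball $U$ uses only that $\R^n\setminus E$ is open and that $L^1_{\mathrm{loc}}$ is a local property, so no further argument is required.
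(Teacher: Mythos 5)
Your proof is correct and takes essentially the same route as the paper's: apply the LIC to a function whose Fourier transform is the indicator of a compact set in $\R^n\setminus E$, discard the nonnegative terms with $\mlat\neq 0$, and read off the integral of the Calder\'on sum from the $\mlat=0$ term (the paper does this directly with an arbitrary compact $S\subset\R^n\setminus E$, so your reduction to small open sets $U$ and the remark about making the translates $U+B^j\mlat$ "mostly disjoint" are superfluous, as you yourself note). No gap.
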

\begin{proof}
  Suppose that $L(f) < \infty$ for all $f \in \mathcal{D}_E$. Then, in
particular by choosing $\hat f = \chi _{S}$ for a compact set $S
\subset \Rn \setminus E$, we have
\[ \int_S \sum_{l=1}^L \sum_{j \in \Z}\abs{\hat
      \psi_l(B^{-j}\xi)}^2\, \mathrm{d}\xi = \sum_{l=1}^L\sum_{j \in \Z} \int_S \abs{\hat
      \psi_l(B^{-j}\xi)}^2\, \mathrm{d}\xi \le L(f) < \infty.
\] 
Since the set $S$ was arbitrarily chosen,  the validity of (\ref{eq:L1-local-int}) follows. 
\end{proof}

\begin{definition}
We say that a Lebesgue measurable set $S \subset \R^n$ is a \emph{multiplicative tiling set} under $A \in \mathrm{GL}(n,\R)$ if
\begin{enumerate}[(a)]
\item $ \bigcup_{j \in \Z} A^j (S) = \R^n, $
\item $A^j(S) \cap A^i(S) = \emptyset \text{ whenever } j \neq i \in \Z $,
\end{enumerate}
where each equality is up to sets of measure zero. 
\end{definition}

Larson, Schulz, Speegle, and Taylor \cite[Theorem 4]{MR2249311} have shown the following interesting result about multiplicative tilings of $\R^n$.

\begin{theorem}
\label{thm:larson-schulz-speegle-cross-sections}
Let $A \in \mathrm{GL}(n,\R)$. 
\begin{enumerate}[(i)]
\item
There exists a multiplicative tiling set if and only if $A$ is not orthogonal.
\item
There exists a multiplicative tiling set of finite measure  if and only if  $\abs{\det{A}}\neq 1$. 
\item
There exists a bounded multiplicative tiling set  if and only if all
eigenvalues of $A$, in modulus, are either strictly greater or strictly smaller than $1$. 
\end{enumerate}
\end{theorem}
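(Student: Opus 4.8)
A measurable set $S$ is a multiplicative tiling set for $A$ precisely when, modulo a null set, it is a fundamental domain for the action of the cyclic group $\Gamma_A = \{A^j : j \in \Z\}$ on $\R^n$ (equivalently on $\R^n \setminus \{0\}$, the origin being a fixed point of measure zero). The non-free locus $\bigcup_{j \ne 0}\ker(A^j - I)$ is null unless $A^j = I$ for some $j \ne 0$, so if $A$ has finite order no tiling set exists: a fundamental domain $S$ would satisfy $A^jS \cap S = \emptyset$ while $A^jS = S$ for some $j \ne 0$, forcing $|S| = 0$, which contradicts $\bigcup_j A^jS = \R^n$. Hence we may assume $A$ has infinite order and work modulo null sets throughout; we also use the volume identity $|A^jS| = |\det A|^j\,|S|$.

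\textbf{Part (i).} For ``only if'' suppose $A$ preserves some inner product $\langle\cdot,\cdot\rangle'$ on $\R^n$ (the notion of ``orthogonal'' relevant here, equivalently $\overline{\Gamma_A}$ compact, which holds for $A \in O(n)$). Then $A$ leaves each ellipsoid $\mathcal E_c = \{x : \langle x, x\rangle' = c\}$ invariant and $|\det A| = 1$; disintegrating Lebesgue measure along $x \mapsto \langle x, x\rangle'$ gives, for a.e.\ $c$, a finite $A$-invariant measure $\mu_c$ on $\mathcal E_c$. If $S$ were a tiling set, then by Fubini $S \cap \mathcal E_c$ would be a fundamental domain for the $\mu_c$-preserving $\Z$-action for a.e.\ $c$, yielding the impossible identity $\mu_c(\mathcal E_c) = \sum_{j \in \Z}\mu_c(A^j(S \cap \mathcal E_c)) = \sum_{j \in \Z}\mu_c(S \cap \mathcal E_c)$ for a finite positive measure. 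For ``if'', $A$ fails to preserve any inner product precisely when it has an eigenvalue off the unit circle or a non-trivial Jordan block; in either case write $\R^n = V \oplus W$ with $V, W$ nonzero $A$-invariant subspaces (a sum of root spaces and its complementary sum, refined by individual Jordan blocks when needed) such that $A|_V$ is expanding, contracting, or a single non-trivial Jordan block with unimodular eigenvalue. One checks that if $F \subset V$ is a fundamental domain for $\Gamma_{A|_V}$ on $V$, then $F \times W$ is one for $\Gamma_A$ on $\R^n$, so it suffices to build $F$ in these model cases. If $A|_V$ is expanding (or contracting, after passing to its inverse), take an adapted norm $\rho$ on $V$ with $\rho((A|_V)^{-1}x) < \rho(x)$ for $x \ne 0$ and set $F = \{x \in V : \rho((A|_V)^{-1}x) < 1 \le \rho(x)\}$; strict monotonicity of $j \mapsto \rho((A|_V)^jx)$ with limits $0$ and $\infty$ makes $F$ a fundamental domain. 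For a single non-trivial Jordan block with unimodular eigenvalue the penultimate coordinate transforms by a translation by (a fixed sign times) the last coordinate, so an explicit ``sawtooth'' set such as $F = \{x \in V : x_k \ne 0,\ \lfloor x_{k-1}/x_k\rfloor = 0\}$ works when $\lambda = \pm 1$, and the complex unimodular case reduces to this as in the computations around \eqref{sh3}--\eqref{sh5}, absorbing the rotation blocks into the choice of $F$.

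\textbf{Parts (ii) and (iii).} For (ii) ``only if'' suppose $|\det A| = 1$. If $A$ preserves an inner product we are done by (i); otherwise the dichotomy above produces an $A$-invariant measurable ``leaf'' function $\Phi$ on $\R^n$ — position along the shear direction in a unipotent block, an invariant product such as $|xy|$ in a hyperbolic block — whose fibers carry $A$-invariant conditional measures (the disintegration of Lebesgue measure along $\Phi$) for which $\Gamma_A$ acts leafwise with a strictly positive ``period''. A tiling set must meet a.e.\ fiber in a fundamental domain for that action, hence in conditional measure at least the period, and integrating over the non-compact, infinite-measure space of leaves forces $|S| = \infty$. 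For (ii) ``if'', replacing $A$ by $A^{-1}$ assume $|\det A| > 1$; one refines the construction of (i) to finite measure by letting the leafwise cross-sections recede into regions of ever-smaller volume — possible because $|\det A| > 1$ genuinely contracts Lebesgue volume in backward time transverse to the orbits — choosing the recession fast enough along the space of leaves for the total volume to converge while keeping the set a genuine fundamental domain. For (iii) ``if'', if all eigenvalues of $A$ have modulus $> 1$ (the case ``all $< 1$'' following via $A^{-1}$), then with an adapted norm $\rho$ on $\R^n$ satisfying $\theta := \norm[\rho]{A^{-1}} < 1$ the set $F = \{x \ne 0 : \rho(A^{-1}x) < 1 \le \rho(x)\}$ is a fundamental domain as in (i) and is bounded, since $x \in F$ forces $\rho(x) \le \norm[\rho]{A}\,\rho(A^{-1}x) < \norm[\rho]{A}$. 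For (iii) ``only if'', if $A$ is neither expanding nor contracting it has a unimodular eigenvalue or eigenvalues on both sides of the unit circle, so it leaves invariant a foliation of an invariant subspace (by $\rho$-spheres on an elliptic block, horizontal affine lines on a shear block, hyperbolas $\{|xy| = c\}$ on a hyperbolic block) whose leaves become unbounded and recede to infinity; a bounded set meets only leaves within a bounded distance of the origin, so its $\Gamma_A$-orbit cannot cover the far leaves, ruling out a bounded tiling set.

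\textbf{Main obstacle.} The crux is the finite-measure construction in part (ii)'s ``if'' direction, together with the matching lower bound in its ``only if'' direction. Unlike the expanding case there is no bounded fundamental domain, so one must produce an unbounded one of finite measure while simultaneously accommodating contracting, unipotent, and elliptic invariant blocks; pinning down the right invariant disintegration and controlling the rate at which the cross-section must recede — so that it remains a genuine fundamental domain yet has finite total volume — is where the real work concentrates.
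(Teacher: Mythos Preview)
The paper does not prove this theorem at all: it is quoted verbatim from Larson, Schulz, Speegle, and Taylor \cite[Theorem~4]{MR2249311} and used as a black box (see the sentence immediately preceding the statement). There is therefore no ``paper's own proof'' to compare against.

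As for your attempt itself, it is a reasonable outline but not a proof. Parts (i) and (iii) are largely sound as sketches: the ellipsoid-slicing argument for (i) ``only if'' is correct, the adapted-norm annulus in the expanding case is the standard construction, and the invariant-function argument for (iii) ``only if'' (level sets of a monomial invariant such as $\prod |x_i|^{a_i}$) is on target. However, your treatment of part (ii) is essentially a description of what needs to be done rather than a proof. For ``only if'' you invoke an unspecified ``$A$-invariant leaf function $\Phi$'' whose conditional measures have ``strictly positive period'', but you neither construct $\Phi$ in general (the mixed case with hyperbolic, unipotent, and elliptic blocks simultaneously is not addressed) nor verify that the period integral over the leaf space actually diverges. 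For ``if'' you say one should let the cross-sections ``recede into regions of ever-smaller volume'' fast enough, and you yourself flag in the final paragraph that this is where ``the real work concentrates'' --- that is an honest admission that the argument is incomplete. The actual construction in \cite{MR2249311} proceeds via explicit cross-sections for the group action and is considerably more delicate than what you have written; if you want a self-contained proof you will need to carry out that construction, or an equivalent one, in full.
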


The following fact is a consequence of the main result of Laugesen,
Weaver, Weiss, and Wilson in \cite{lwww}. It can also be deduced from Theorem~\ref{thm:larson-schulz-speegle-cross-sections} as we see below.

\begin{lemma}
\label{thm:existence-calderon}
Let $A \in \mathrm{GL}(n,\R)$. Then $\abs{\det{A}}\neq 1$ if and only if there exists a function $\psi \in L^2(\R^n)$ such that 
\begin{equation}\label{ec1}
\sum_{j \in \Z} \abssmall{\hat \psi(B^{-j} \xi)}^2 = 1
\qquad\text{for a.e. }\xi \in \R^n.
\end{equation}
\end{lemma}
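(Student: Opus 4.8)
Since $B=A^T$ we have $\abs{\det B}=\abs{\det A}$, so it suffices to prove the statement with $B$ in place of $A$. The plan is to extract both implications from the Larson--Schulz--Speegle--Taylor result of Theorem~\ref{thm:larson-schulz-speegle-cross-sections}; below, $\abs{E}$ denotes the Lebesgue measure of a measurable set $E$, and integration is with respect to Lebesgue measure.

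\emph{Existence when $\abs{\det A}\ne1$.} Assume $\abs{\det B}\ne1$. By Theorem~\ref{thm:larson-schulz-speegle-cross-sections}(ii), applied to $B$, there is a multiplicative tiling set $S$ under $B$ with $\abs{S}<\infty$. Define $\psi\in L^2(\Rn)$ by $\hat\psi=\charfct S$; this is legitimate since $\charfct S\in L^2(\Rn)$, and $\norm[2]{\psi}^2=\abs S<\infty$. Because $\setprop{B^j(S)}{j\in\Z}$ partitions $\Rn$ up to null sets,
\[
  \sum_{j\in\Z}\abssmall{\hat\psi(B^{-j}\xi)}^2
  =\sum_{j\in\Z}\charfct S(B^{-j}\xi)
  =\sum_{j\in\Z}\charfct{B^j(S)}(\xi)=1\qquad\text{for a.e. }\xi\in\Rn,
\]
which is \eqref{ec1}.

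\emph{Nonexistence when $\abs{\det A}=1$.} I argue by contraposition: assume $\abs{\det B}=1$ and that some $\psi\in L^2(\Rn)$ satisfies \eqref{ec1}, and derive a contradiction. Put $F\defined\abssmall{\hat\psi}^2\in L^1(\Rn)$ and $M\defined\int_{\Rn}F=\norm[2]{\psi}^2$; since the left side of \eqref{ec1} is identically $1$, $\hat\psi$ is not the zero function, so $M\in(0,\infty)$. If $B$ is orthogonal, then $B^{-j}(\B(0,R))=\B(0,R)$ for every $j$, and integrating \eqref{ec1} over $\B(0,R)$, using Tonelli (all integrands are $\ge0$) and the unit-Jacobian change of variables $\xi\mapsto B^{-j}\xi$, gives $\abs{\B(0,R)}=\sum_{j\in\Z}\int_{\B(0,R)}F$; this forces $\int_{\B(0,R)}F=0$ for every $R>0$, hence $F\equiv0$, contradicting \eqref{ec1}. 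If $B$ is not orthogonal, then Theorem~\ref{thm:larson-schulz-speegle-cross-sections}(i) supplies a multiplicative tiling set $S$ under $B$; since $\abs{\det B}=1$ the map $\xi\mapsto B^j\xi$ has unit Jacobian, and rewriting \eqref{ec1} as $\sum_{j\in\Z}F(B^{j}\xi)=1$ a.e.\ (replace $j$ by $-j$) and using that $\setprop{B^j(S)}{j\in\Z}$ tiles $\Rn$,
\[
  M=\int_{\Rn}F=\sum_{j\in\Z}\int_{B^j(S)}F=\sum_{j\in\Z}\int_{S}F(B^{j}\xi)\,\mathrm{d}\xi=\int_{S}\sum_{j\in\Z}F(B^{j}\xi)\,\mathrm{d}\xi=\abs S.
\]
Thus $S$ is a multiplicative tiling set of \emph{finite} measure, so Theorem~\ref{thm:larson-schulz-speegle-cross-sections}(ii) forces $\abs{\det B}\ne1$, contrary to assumption. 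In either case we reach a contradiction, so no such $\psi$ exists.

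The calculations are routine: Tonelli for nonnegative families of integrals, and unit-Jacobian changes of variables. The one point needing care is that the identity $M=\abs S$ presupposes the existence of a multiplicative tiling set, which is precisely why the orthogonal case---where, by Theorem~\ref{thm:larson-schulz-speegle-cross-sections}(i), none exists---is disposed of separately at the outset. I do not expect a genuine obstacle; the real content is just the observation that when $\abs{\det B}=1$, equation \eqref{ec1} pins the Lebesgue measure of any $B$-multiplicative tiling set to $\norm[2]{\psi}^2<\infty$, which is incompatible with part (ii) of Theorem~\ref{thm:larson-schulz-speegle-cross-sections}.
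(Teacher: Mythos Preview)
Your proof is correct and follows essentially the same route as the paper's: the forward direction via $\hat\psi=\charfct S$ for a finite-measure multiplicative tiling set, and the converse by splitting into the orthogonal case (integrate over a $B$-invariant region and use the unit-Jacobian change of variables) and the non-orthogonal case (show $\abs{S}=\norm[2]{\psi}^2<\infty$ and invoke Theorem~\ref{thm:larson-schulz-speegle-cross-sections}(ii)). The only cosmetic difference is that the paper integrates over the annulus $\{1/R<\abs\xi<R\}$ rather than the ball $\B(0,R)$, and phrases the orthogonal-case contradiction as ``the right-hand side is either zero or infinite''; your version reaching $F\equiv 0$ and then contradicting \eqref{ec1} is equally valid.
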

\begin{proof}
If $\abs{\det{A}} \ne 1$, then we simply take $\hat \psi = \chi_S$, where $S$ is a multiplicative tiling set of finite measure for $B=A^T$. Conversely, assume that \eqref{ec1} holds. There are two cases to consider. Suppose that $A$ is an orthogonal matrix. Then, by the change of variables formula for any $R>1$, we have
\[
|\{\xi\in \R^n: 1/R<|\xi|<R \}| = \int_{1/R<|\xi|<R} \sum_{j \in \Z} \abssmall{\hat \psi(B^{-j} \xi)}^2 d\xi = \sum_{j\in \Z} \int_{1/R<|\xi|<R} \abssmall{\hat \psi(\xi)}^2 d\xi.
\]
The left-hand side of this equation is finite and positive, while the
term of the far right is either zero or infinite, which is a contradiction.
Suppose next that $A$ is not orthogonal and $|\det A|=1$. Let $S$ be a multiplicative tiling set for $B$. Since $|\det{B}|=1$, by the change of variables formula, we have
\[
|S|= \int_S \sum_{j \in \Z} \abssmall{\hat \psi(B^{-j} \xi)}^2 d\xi = \sum_{j\in \Z} \int_{B^{-j}S} \abssmall{\hat \psi(\xi)}^2 d\xi= ||\hat \psi||^2 = ||\psi||^2.
\]
This implies that $S$ has finite measure. Then, Theorem~\ref {thm:larson-schulz-speegle-cross-sections}(ii) yields $|\det{B}| \ne 1$, which is a contradiction. Consequently, we have $|\det A| \ne 1$.
\end{proof}

\subsection{The local integrability condition and the lattice counting estimate}

The main result of this section shows a link between the lattice counting estimate, the local integrability condition, and  Calder\'on's formula. We start with a necessary definition of sets that appear in the proof of Theorem~\ref{thm:lic-and-calderon-under-volume-bound}.

\begin{definition}
  \label{def:def-of-E-and-F}
  For a given $A \in \mathrm{GL}(n,\R)$ we consider $B=A^T$ as a linear map
  acting on $\C^n$. Let $E^c \subset \C^n$ and $F^c \subset \C^n$ be the
  span of eigenspaces corresponding to eigenvalues $\lambda$ of $B$
  satisfying $\abs{\lambda}\le 1$ and $\abs{\lambda} > 1$,
  respectively. Define $E = E^c \cap \R^n$ and $F = F^c \cap
  \R^n$. For $p,q,s>0$, define
  \[
  Q(p,q,s)=\setprop{x=x_E+x_F}{x_E \in E, x_F \in F, \abs{x_E}<p, s<
    \abs{x_F} <q}.
  \]
\end{definition}
Since complex eigenvalues of $B$ come in conjugate pairs, the spaces $E^c$ and $F^c$ are complexifications of the real spaces $E$ and $F$, respectively.

\begin{lemma}
\label{rem:tiling-away-from-E}
Let $B\in \mathrm{GL}(n,\R)$ with $|\det B|>1$ be given.
For any $\eps, s>0$, there exists a multiplicative tiling set $S$ for the dilation $B$ such that for some $p,q>0$ we have
\begin{equation}\label{taf0}
\abs{S \setminus Q(p,q,s)} < \eps \abs{S}.
\end{equation}
\end{lemma}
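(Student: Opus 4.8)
The plan is to start from \emph{some} multiplicative tiling set $S_0$ of finite measure for $B$ — which exists by Theorem~\ref{thm:larson-schulz-speegle-cross-sections}(ii) since $\abs{\det B}>1$ — and then modify it so that almost all of its mass sits in a region $Q(p,q,s)$ of the form prescribed in Definition~\ref{def:def-of-E-and-F}. The key geometric observation is that under backward iteration by $B$, mass concentrated along the $E$-directions (eigenvalues of modulus $\le 1$) does not escape to infinity in the $F$-directions, while forward iteration contracts the $F$-component toward $0$ relatively slowly compared to how it expands; more precisely, writing $x=x_E+x_F$ in the $B$-invariant splitting $\R^n=E\oplus F$, the map $B$ acts as a (non-expanding in a generalized sense) operator on $E$ and as an expanding operator on $F$. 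Since $\abs{\det B}>1$, forward iterates $B^j S_0$ for $j\ge 0$ have total measure $\abs{\det B}^j\abs{S_0}\to\infty$, and these iterates must ``fill up'' $\R^n$; the tiling property (a) guarantees that for a.e.\ $\xi$ there is a unique $j(\xi)\in\Z$ with $B^{-j(\xi)}\xi\in S_0$.

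The concrete construction I would use: fix $s>0$ and $\eps>0$. First choose $q$ large enough that the piece of $S_0$ with $\abs{x_F}\ge q$ has measure less than $\tfrac{\eps}{3}\abs{S_0}$ (possible since $\abs{S_0}<\infty$); call the truncated set $S_1 = S_0 \cap \{\abs{x_F}<q\}$. The problematic parts are now (1) the points of $S_1$ with $\abs{x_F}\le s$ — i.e.\ too close to $E$ — and (2) points with $\abs{x_E}$ large. To handle (1), observe that for a point $x\in S_1$ with small $\abs{x_F}$, a suitable \emph{forward} iterate $B^{m}x$ has its $F$-component expanded past $s$ (using that $B$ strictly expands $F$, Proposition~\ref{thm:exp-subspace-charac}-type control, growth rate $\ge c^{-1}\gamma^m$), while the $E$-component is controlled. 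Because forward iteration along the tiling reshuffles points between the cells $B^jS_0$, I can replace each such bad point's cell-representative by its appropriate iterate: concretely, redefine $S$ cell-by-cell over the $\Z$-orbit, choosing for each orbit the representative index $j$ so that the resulting point lands in the good region whenever possible. This is legitimate because replacing $S_0$ by $\bigcup_k B^{j_k}(S_0\cap O_k)$ for any measurable partition $\{O_k\}$ of $S_0$ into $B$-orbit-pieces and any integer choices $j_k$ again yields a multiplicative tiling set for $B$ of the same measure. To handle (2), note that $\{x\in S_1:\abs{x_E}<p\}$ increases to $S_1$ as $p\to\infty$, so choose $p$ large enough to discard the remainder, losing at most another $\tfrac{\eps}{3}\abs{S_0}$ in measure.

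The main obstacle, and the step that needs genuine care, is the simultaneous control in step~(1): when we iterate forward to push $\abs{x_F}$ above $s$, we must be sure we do not thereby push $\abs{x_E}$ or $\abs{x_F}$ itself out past the bounds $p$ and $q$ for more than an $\eps$-fraction of the mass, and that the modified cells still genuinely partition $\R^n$. The clean way to organize this is a measure-theoretic exhaustion: work with the ``fundamental'' slab $\{x\in\R^n: s<\abs{x_F}\}$, which by the expansion on $F$ and Lyapunov stability on $E$ is itself (after further truncation in $\abs{x_E}$ and $\abs{x_F}$) a multiplicative tiling set up to small error — indeed one may take $S$ to be a measurable selection of one point from each $B$-orbit lying in an annular region $s<\abs{x_F}<q$, $\abs{x_E}<p$, and verify (a) and (b) directly: (b) because a single orbit meets such an annulus in a ``thin'' set one can make disjoint by a Rokhlin-type argument, and (a) because every orbit $\{B^j\xi\}$ has $\abs{(B^j\xi)_F}$ ranging over all of $(0,\infty)$ as $j$ ranges over $\Z$ (strict expansion on $F$), hence meets the slab $s<\abs{x_F}<q$ provided $q/s$ exceeds the expansion factor $\sup_{x\in F,\abs{x}=1}\abs{Bx}/\!\inf_{x\in F,\abs{x}=1}\abs{Bx}$, while the $E$-component stays in a bounded range along the finitely many relevant $j$'s, so for suitable $p$ it also satisfies $\abs{x_E}<p$ on an $\eps$-large portion. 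Assembling these estimates and bookkeeping the three $\tfrac{\eps}{3}$ losses gives \eqref{taf0}.
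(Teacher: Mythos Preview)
Your overall strategy---start from a finite-measure tiling set $S_0$ and push mass away from $E$ by forward iteration---is the paper's, but two claims you make are false. First, replacing $S_0$ by $\bigcup_k B^{j_k}(S_0\cap O_k)$ does \emph{not} preserve measure: each piece is scaled by $\abs{\det B}^{j_k}$. Second, the lemma is stated for arbitrary $B$ with $\abs{\det B}>1$, so by Proposition~\ref{thm:exp-subspace-charac} your appeal to ``Lyapunov stability on $E$'' is unjustified---$B|_E$ may have nontrivial Jordan blocks for unit-modulus eigenvalues, and then $\abs{B^m x_E}$ grows polynomially in $m$. Since your per-point iteration needs $m=m(x)\to\infty$ as $\abs{x_F}\to 0$, you get no uniform control on the $E$-component and no a~priori bound on the measure of the modified set, so the final truncation in $p$ and $q$ is not obviously harmless. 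The alternative Rokhlin-type construction you sketch at the end inherits the same problem and is too vague to carry weight.

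The paper's argument is much simpler and avoids all of this. Since $E$ is a null set, first choose $\delta>0$ small enough that $\abs{S_0\setminus Q(\infty,\infty,\delta)}<\tfrac{\eps}{2}\abs{S_0}$. Then apply a \emph{single, uniform} power $B^j$ to $S_0\cap Q(\infty,\infty,\delta)$, with $j$ chosen so that $B^j(Q(\infty,\infty,\delta))\subset Q(\infty,\infty,s)$ (using only that $B|_F$ is expansive), and leave the small remainder $S_0\setminus Q(\infty,\infty,\delta)$ untouched. The resulting set $S_j$ is still a multiplicative tiling set of finite measure (indeed $\abs{S_j}\ge\abs{S_0}$ since $\abs{\det B}>1$), so one may simply pick $p,q$ large at the end. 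No per-orbit selection, no Rokhlin towers, and no hypothesis whatsoever on $B|_E$.
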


That is, for a given $\eps>0$ and $s>0$, we can always find a multiplicative
tiling set for the dilation $B$ that, up to a relative error $\eps$, lies inside
$Q(p,q,s)$ for sufficiently large $p,q>0$.

\begin{proof}
By Theorem~\ref{thm:larson-schulz-speegle-cross-sections}, there exists a multiplicative tiling set $S_0$ for $B$ of finite measure. Since 
\[
\R^n \setminus E = \bigcup_{\delta>0} Q(\infty,\infty,\delta),
\]
we can find $\delta>0$ such that
\begin{equation}\label{taf1}
\abs{S_0 \setminus Q(\infty,\infty,\delta)} < \frac{\eps}2 \abs{S_0}.
\end{equation}
For any $j\in \N$, define
\[
S_j = B^j(S_0 \cap Q(\infty,\infty,\delta)) \cup (S_0 \setminus Q(\infty,\infty,\delta)).
\]
Clearly, $S_j$ is a multiplicative tiling set for
$B$. Since the dilation $B$ is expanding in the direction of the space $F$, there exists $j\in\N$ such that 
\begin{equation}\label{taf3}
B^j( Q(\infty,\infty,\delta)) \subset Q(\infty,\infty, s).
\end{equation}
Combining \eqref{taf1} and \eqref{taf3} yields
\[
|S_j \setminus Q(\infty,\infty,s)| < \frac{\eps}2 |S_0| \le \frac{\eps}2 |S_j|.
\]
Hence, by choosing sufficiently large $p,q>0$ we have
\[
|S_j \setminus Q(p,q,s)| < \eps |S_j|,
\]
which shows \eqref{taf0}.
\end{proof}

The following result characterizes the local integrability condition. 

\begin{theorem}
\label{thm:lic-and-calderon-under-volume-bound}
Let $A \in \mathrm{GL}(n,\R)$ with $\abs{\det{A}}>1$ be given, and let $\LG \subset \R^n$ be a full-rank lattice. The following assertions are equivalent:
\begin{enumerate}[(i)]
\item $(B,\LG^*)$ satisfies the lattice counting estimate~\eqref{eq:lca},
\item For any $\Psi = \set{\psi_1, \dots, \psi_L} \subset L^2(\R^n)$, the LIC~\eqref{eq:af-LIC} holds for $\af{\Psi,A,\LG}$ if and only if $\Psi$ satisfies the Calder\'o{}n integrability condition \eqref{eq:L1-local-int}.    
\end{enumerate}
\end{theorem}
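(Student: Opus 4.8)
The plan is to prove the two implications (i)$\Rightarrow$(ii) and $\neg$(i)$\Rightarrow\neg$(ii) separately. The forward direction is the analytic heart: assuming the lattice counting estimate \eqref{eq:lca}, I must show that for \emph{any} finite $\Psi$, the Calder\'on integrability \eqref{eq:L1-local-int} forces $L(f)<\infty$ for every $f\in\mathcal D_E$ (the converse, that LIC implies Calder\'on integrability, is already Lemma~\ref{thm:LIC-implies-L1-loc} and holds unconditionally). To do this I would fix $f\in\mathcal D_E$, so $\hat f$ is bounded with compact support $K\subset\R^n\setminus E$, and estimate $L(f)$ by splitting the triple sum over $\ell$, $j\in\Z$, $k\in\LG^*$. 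The inner sum over $k$ for fixed $j$ collapses nicely: for fixed $\xi\in K$, the integrand $|\hat f(\xi+B^jk)|^2$ is nonzero only when $\xi+B^jk\in K$, i.e. $k\in B^{-j}(K-\xi)\subset B^{-j}(K-K)$. Since $K$ is compact, $K-K\subset\B(0,r)$ for some $r$, so the number of relevant $k\in\LG^*$ is at most $\cardbig{\LG^*\cap B^{-j}(\B(0,r))}$, which by \eqref{eq:lca} is $\le C\max(1,|\det B|^{-j})$. Bounding $|\hat f|^2\le\|\hat f\|_\infty^2$ on those translates and pulling the count out, one gets
\[
L(f)\le C\|\hat f\|_\infty^2\sum_{\ell=1}^L\sum_{j\in\Z}\max(1,|\det B|^{-j})\int_K |\hat\psi_\ell(B^{-j}\xi)|^2\,\mathrm d\xi.
\]
Now I separate the sum at $j=0$. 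For $j\ge 0$ the factor $\max(1,|\det B|^{-j})=1$ and $\sum_{j\ge0}\int_K|\hat\psi_\ell(B^{-j}\xi)|^2\,\mathrm d\xi$ is dominated by $\int_K\sum_{j\in\Z}|\hat\psi_\ell(B^{-j}\xi)|^2\,\mathrm d\xi$, which is finite by \eqref{eq:L1-local-int} since $K$ is a compact subset of $\R^n\setminus E$. For $j<0$ we change variables $\eta=B^{-j}\xi$, picking up $|\det B|^{j}$, which exactly cancels the $\max(1,|\det B|^{-j})=|\det B|^{-j}$ factor; the resulting integral is over $B^{-j}K$, and the sum over $j<0$ becomes $\sum_{j<0}\int_{B^{-j}K}|\hat\psi_\ell(\eta)|^2\,\mathrm d\eta$. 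The delicate point is that the sets $B^{-j}K$ for $j<0$ should have controlled overlap so this is again bounded by a local integral of the Calder\'on sum; this is where I would invoke a multiplicative tiling set $S$ from Lemma~\ref{rem:tiling-away-from-E}, covering $K$ by finitely many dilates $B^mS$ (legitimate since $K$ is compact away from $E$ and $Q(p,q,s)$-sets exhaust $\R^n\setminus E$), and using that $\{B^jS\}_{j\in\Z}$ partitions $\R^n$, so $\sum_{j}\int_{B^jS}|\hat\psi_\ell|^2\le\|\psi_\ell\|^2<\infty$.

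For the reverse direction $\neg$(i)$\Rightarrow\neg$(ii), suppose \eqref{eq:lca} fails: there is $r>0$ and a sequence $j_m$ with $N_m:=\cardbig{\LG^*\cap B^{j_m}(\B(0,r))}/\max(1,|\det B|^{j_m})\to\infty$. I want to construct a single $\Psi$ (in fact a single $\psi$) satisfying the Calder\'on integrability \eqref{eq:L1-local-int} but violating LIC. The natural candidate is $\hat\psi=\chi_S$ for a suitable set $S$; then $\sum_j|\hat\psi(B^{-j}\xi)|^2=\sum_j\chi_{B^jS}(\xi)$, and if $S$ is chosen inside a multiplicative tiling set (or is one up to a bounded piece) this Calder\'on sum is bounded, hence certainly in $L^1_{\mathrm{loc}}(\R^n\setminus E)$. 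The failure of LIC is then detected by a test function $f$ with $\hat f$ supported on a small ball: $L(f)$ contains the term for $j=-j_m$ (or $j=j_m$, depending on sign conventions) which, after the change of variables above, is comparable to $N_m\int|\hat f|^2\cdot(\text{mass of }\chi_S\text{ over the translates})$, and the divergence of $N_m$ makes $L(f)=\infty$. The care needed here is to place $\supp\hat f$ away from $E$ (so $f\in\mathcal D_E$) and to arrange $S$ so that the translates $B^{j}k+\supp\hat f$ genuinely land inside $S$ for the $\approx N_m$ many lattice points $k$ — this is where Lemma~\ref{thm:prog} on large arithmetic progressions inside $\LG^*\cap B^{j_m}(\B(0,r))$ (alluded to in the remark after Definition~\ref{def:lattice-counting}) is likely the right tool, since a progression gives a structured family of translates one can fit simultaneously into a tiling set.

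The main obstacle I anticipate is the bookkeeping in the forward direction for the $j<0$ tail: one must pass from ``$\int_K\sum_j|\hat\psi_\ell(B^{-j}\xi)|^2<\infty$'' to control of $\sum_{j<0}\int_{B^{-j}K}|\hat\psi_\ell|^2$, and these are not the same sum — the dilates $B^{-j}K$ for $j<0$ march off toward infinity along the expanding directions $F$ and could in principle overlap badly or exit any fixed tiling set. Reconciling this requires exactly the geometry packaged in Definition~\ref{def:def-of-E-and-F} and Lemma~\ref{rem:tiling-away-from-E}: since $K\subset Q(p_0,q_0,s_0)$ for some parameters, and $B$ is expanding on $F$ and merely Lyapunov-bounded on $E$, the forward dilates $B^{-j}K$ ($j<0$) stay within a fixed $Q(p,q,s)$ only after one truncates, so the clean estimate needs the tiling set $S$ from Lemma~\ref{rem:tiling-away-from-E} adapted to that $Q$, plus the lattice counting estimate itself to bound how many dilation levels can contribute near any point. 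I expect the rest — the $k$-sum collapse, the change of variables, and the $\chi_S$ construction in the converse — to be routine once this geometric reconciliation is set up correctly.
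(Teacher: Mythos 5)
Your forward direction follows the paper's argument in outline --- the collapse of the $k$-sum via $k\in B^{-j}(\supp\hat f-\supp\hat f)$ together with \eqref{eq:lca}, the split at $j=0$, and the change of variables for $j<0$ are all exactly right --- but your proposed resolution of the $j<0$ tail is not the right mechanism and, as stated, has a hole. Covering a compact $K\subset\R^n\setminus E$ by \emph{finitely many} dilates $B^mS$ of a multiplicative tiling set is not automatic: the dilates cover $\R^n$ only a.e.\ as a countable union, and infinitely many of them can meet $K$ in positive measure (points of $S$ with arbitrarily small $F$-component take arbitrarily many iterations to reach $K$). What actually closes the estimate is that $T=\supp\hat f\subset Q(p,q,s)$ and $B$ is expanding on $F$, so every orbit $\set{B^j\xi}_{j\in\Z}$ visits $Q(p,q,s)$ at most $K$ times for a uniform $K$; hence $\cardbig{\setprop{j}{\xi\in B^{-j}(T)}}\le K$ and $\sum_{j<0}\int_{B^{-j}(T)}\abssmall{\hat\psi}^2\le K\normsmall{\psi}^2$ by Tonelli. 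No tiling set is needed in this direction. You do gesture at ``bounding how many dilation levels contribute near any point,'' but you attribute it to the lattice counting estimate rather than to the geometry of $Q(p,q,s)$ and expansiveness on $F$; the former does not supply it.

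The converse is where the genuine gap lies. Taking $\hat\psi=\chi_S$ for a single tiling set does \emph{not} violate the LIC in general: with $\hat f=\chi_T$, the $j$-th term of $L(f)$ is at most $\cardbig{\LG^*\cap B^{-j}(T-T)}\cdot\abs{T\cap B^jS}$, and since $\sum_j\abs{T\cap B^jS}=\abs{T}<\infty$ while the lattice counts blow up only along a sparse subsequence $j_m$ --- along which $\abs{T\cap B^{j_m}S}$ typically tends to $0$ --- the whole sum can easily converge. The paper instead spreads the mass of $\hat\psi$ over infinitely many scales: with $v_j=\card{B^j\LG^*\cap\B(0,r)}$ and $w_j=v_j\abs{\det B}^j$, one picks a subsequence with $\sum_i 1/w_{j_i}<\infty$ and sets $\hat\psi=\sum_i v_{j_i}^{-1/2}\chi_{B^{-j_i}(S)}$, so that $\psi\in L^2$, the Calder\'on sum is the convergent constant $\sum_i 1/v_{j_i}$, and yet the single test function $\hat f=\chi_T$ with $T=(S\cap Q(p,q,s))+\B(0,r)$ and $s>\norm{P}r$ (which keeps $\overline T$ compactly inside $\R^n\setminus E$) receives a contribution of at least $(1-\eps)\abs{S}$ from \emph{each} scale $j_i$, forcing $L(f)=\infty$. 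Your appeal to Lemma~\ref{thm:prog} here is a red herring: no arithmetic-progression structure is used in this step; the lower bound comes simply from the $v_j$ lattice points $k$ with $B^jk\in\B(0,r)$, each of which translates $S\cap Q(p,q,s)$ into $T$.
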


\begin{proof}
Let $E$, $F$, and $Q(p,q,s)$, be as in Definition~\ref{def:def-of-E-and-F}.

 (i) $\Rightarrow$ (ii):  Let  $\Psi = \set{\psi_1, \dots, \psi_L} \subset L^2(\R^n)$. Suppose that the lattice counting estimate~(\ref{eq:lca}) holds.  If $\af{\Psi,A,\LG}$ satisfies the LIC, then, by Lemma~\ref{thm:LIC-implies-L1-loc}, $\Psi$ satisfies the Calder\'o{}n integrability condition. 

 Assume on the other hand that the Calder\'o{}n integrability
 condition \eqref{eq:L1-local-int} holds. For simplicity assume that $L=1$. Let $f \in \mathcal{D}_E$. Then $T:=\supp \hat f
 \subset Q(p,q,s) \subset \R^n \setminus E$ for some $s,p,q>0$. 
 By the lattice counting estimate \eqref{eq:lca}, we have
\[ \cardbig{B^j \Gamma^* \cap (\supp \hat{f}-\supp \hat{f})} \le  C
    \max{(1,\abs{\det{B}}^{-j})}. \]
Hence,
 \begin{align*}
    L(f) &\le \sum_{j \in \Z} \normsmall[\infty]{\hat f}^2 C
    \max{(1,\abs{\det{B}}^{-j})}  \int_{\supp
      \hat f} \abs{\hat \psi(B^{-j} \xi)}^2\, \mathrm{d}\xi  \\
& = \normsmall[\infty]{\hat f}^2 C \sum_{j \ge 0}   \int_{T} \abs{\hat \psi(B^{-j} \xi)}^2\, \mathrm{d}\xi 
+  \normsmall[\infty]{\hat f}^2 C \sum_{j < 0}  \int_{B^{-j}(T)} \abs{\hat \psi(\xi)}^2\, \mathrm{d}\xi .  \end{align*}
Since the matrix $B$ is expansive on $F$, there exists a constant $K \in \N$ such that each trajectory $\{B^j \xi\}_{j\in\Z}$ hits $Q(p,q,s)$ at most $K$ times. Thus,
\[ 
\cardbig{\setprop{j \in \Z}{\xi \in B^{-j}(T)}} \le K.
\]
Thereby, we can continue the above estimate:
\begin{align*}
L(f)\le \normsmall[\infty]{\hat f}^2 C    \int_{T} \sum_{j \ge 0} \abs{\hat \psi(B^{-j} \xi)}^2\, \mathrm{d}\xi 
+  \normsmall[\infty]{\hat f}^2 C K \int_{ \Rn}
\abs{\hat \psi(\xi)}^2\, \mathrm{d}\xi < \infty,
  \end{align*}
where the last inequality is a consequence of (\ref{eq:L1-local-int}) and
$\psi \in L^2(\Rn)$. 

 (ii) $\Rightarrow$ (i): We prove the contrapositive assertion. So suppose that $(B,\LG^*)$ does not satisfy the lattice counting estimate~(\ref{eq:lca}) for some $r>0$. Since the  lattice counting estimate fails for either $j \in -\N$ or $j \in \N$, we have two cases:
 \begin{enumerate}[(a)]
\item $\sup_{j < 0}w_j = \infty$, where $w_j:= \card{B^j\LG^* \cap \B(0,r)} \abs{\det{B}}^j$, 
 \item $\sup_{j \ge 0}v_j = \infty$, where $v_j:=\card{B^j\LG^* \cap \B(0,r)}$.
 \end{enumerate}

Suppose case (a) holds. Choose a subsequence $\seq{w_{j_i}}_{i=1}^\infty$ of $\seq{w_{-1},w_{-2},\dots}$ such that $0>j_1>j_2>\dots$ and 
\[ 
\sum_{i=1}^\infty \frac{1}{w_{j_i}} < \infty.
\]
Let $P:\Rn \to \Rn$ be a projection (not necessarily orthogonal) such that $\ker P = E$ and $P(\Rn)=F$.
Let $\eps>0$ and pick $s>||P||r$. Let $S$ be a
multiplicative tiling set for $B$ as in Lemma~\ref{rem:tiling-away-from-E}. 
Define $\psi:\R^n \to \C$ by
\[ \hat \psi(\xi)= \sum_{i=1}^\infty \frac{1}{\sqrt{v_{j_i}}} \chi_{B^{-j_i}(S)} (\xi).\] 
We have $\psi \in L^2(\Rn)$ since
\begin{align*}
  \int_{\R^n} \abssmall{\hat \psi(\xi)}^2 \mathrm{d}\xi = \sum_{i=1}^\infty \frac{1}{v_{j_i}} \int_{\R^n} \chi_{B^{-j_i}(S)} \mathrm{d}\xi = \abs{S} \sum_{i=1}^\infty \frac{\abs{\det{B}}^{-j_i}}{v_{j_i}} = \abs{S} \sum_{i=1}^\infty \frac{1}{w_{j_i}} < \infty.
\end{align*}
Since $\xi \mapsto \sum_{j \in \Z} \abssmall{\hat\psi (B^{-j} \xi)}^2$
is $B$-dilative periodic, we see that  
\begin{align*}
  \sum_{j \in \Z} \abssmall{\hat\psi (B^{-j} \xi)}^2 =
  \sum_{i=1}^\infty \frac{1}{v_{j_i}} < \infty \qquad \text{for
    a.e. $\xi \in S$},
\end{align*}
also holds for a.e. $\xi \in \R^n$.
Hence, the  Calder\'o{}n integrability condition \eqref{eq:L1-local-int} holds. 

We now show that $\af{\Psi,A,\LG}$ does not satisfy the LIC. Define $T=(S \cap
Q(p,q,s)) + \B(0,r)$. Since $s>||P||r$,  we claim that $\overline{T} \subset \R^n \setminus E$. Indeed, take any $x\in T$ and write it as 
\[
x=x_1+x_2, \qquad x_1 \in Q(p,q,s),\ x_2 \in \B(0,r).
\]
Then,
\[
||Px|| \ge ||Px_1||- ||Px_2|| \ge s- ||P||r>0.
\]
Hence, $\overline{T} \subset \R^n \setminus E$ is compact.

Let $\hat f = \chi_T$. Then $f \in \mathcal{D}_E$, and by definition of $T$ and $v_j$, we have for $j \in \Z$,
\[ 
\cardpropbig{ \{ \mlat \in \LG^\ast}{\hat f(\xi + B^j \mlat)=1 \text{ for } \xi
  \in S\cap Q(p,q,s) \} } \ge  v_j . 
\] 
From this and $S \cap Q(p,q,s) \subset T=\supp \hat f$, it follows that
\begin{align*}
  L(f) \ge \sum_{j <0} \sum_{\mlat \in \LG^\ast} \int_{\supp
      \hat f} \abssmall{\hat f(\xi+B^j \mlat)}^2 \, \abssmall{\hat
      \psi(B^{-j}\xi)}^2\, \mathrm{d}\xi \ge  \sum_{j <0} v_j \int_{S\cap Q(p,q,s)} \abssmall{\hat
      \psi(B^{-j}\xi)}^2\, \mathrm{d}\xi.
\end{align*}
By a change of variables and Lemma~\ref{rem:tiling-away-from-E}
\begin{align*}
  L(f) \ge &\sum_{i=1}^\infty  v_{j_i} \int_{B^{-j_i}(S\cap Q(p,q,s))} \abssmall{\hat
      \psi(\xi)}^2 \abs{\det{B}}^{j_i} \,  \mathrm{d}\xi \\ &= \sum_{i=1}^\infty \abs{S\cap Q(p,q,s) } \ge \sum_{i=1}^\infty (1-\eps)\abs{S} = \infty 
\end{align*}

Suppose now case (b) holds. Choose a subsequence $\seq{v_{j_i}}_{i=1}^\infty$ of $\seq{v_0,v_1,\dots}$ such that $0\le j_1<j_2<\dots$ and 
\[ \sum_{i=1}^\infty \frac{1}{v_{j_i}} < \infty ,\]
and define $\psi:\R^n \to \C$ by
\[ 
\hat \psi(\xi)= \sum_{i=1}^\infty \frac{1}{\sqrt{w_{j_i}}} \chi_{B^{j_i}(S)} (\xi).\] 
The rest of the proof is dealt in a similar way as in the case (a) and is left to the reader.
\end{proof}

\subsection{Characterizing equations}
\label{sec:char-equat}

The papers \cite{MR2264324} and \cite{HLW} establish wavelet characterizing equations for dilations that are expanding on a subspace. In light of Theorem~\ref{lceexp}, these are optimal results unless extra information about a lattice is also taken into account. Here we shall also show the characterizing equations for pairs of dilations and lattices $(B,\Gamma^*)$ satisfying the lattice counting estimate (\ref{eq:lca}).

The following result generalizes \cite[Theorem 6.6]{HLW}, see also \cite[Theorem
1.1]{MR2264324}. For the definitions of (Parseval) frames, dual frames and Bessel
sequences, we refer the reader to the book \cite{MR1946982}.

\begin{theorem}\label{ch}
Let $A \in \mathrm{GL}(n,\R)$, $\abs{\det{A}}>1$, and $\LG \subset \R^n$ is a full-rank lattice. 
 Suppose that $(B,\Gamma^*)$ satisfies the lattice counting estimate
 \eqref{eq:lca}.
 Then, the wavelet system
$\af{\Psi,A,\Ga}$ generated by $\Psi = \set{\psi_1, \dots, \psi_L} \subset L^2(\R^n)$ is a Parseval frame if and only if
for all $\alpha\in\Ga^*$ we have
\begin{equation}\label{ch1}
\sum_{l=1}^L \sum_{j\in \Z, B^{-j}\alpha \in \Ga^*} \hat\psi_l(B^{-j}\xi) \overline{\hat \psi_l(B^{-j}(\xi+\alpha))} = \delta_{\alpha,0}
\qquad\text{for a.e. }\xi\in \R^n.
\end{equation}
\end{theorem}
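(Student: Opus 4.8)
The plan is to reduce Theorem~\ref{ch} to the already-known characterization of Parseval wavelet frames \emph{under the local integrability condition}, using Theorem~\ref{thm:lic-and-calderon-under-volume-bound} as the bridge that supplies the LIC in both directions from the lattice counting estimate \eqref{eq:lca}. Concretely, I would show that each of the two sides of the claimed equivalence forces the Calder\'on integrability condition \eqref{eq:L1-local-int}, invoke Theorem~\ref{thm:lic-and-calderon-under-volume-bound} to upgrade this to the LIC \eqref{eq:af-LIC}, and then quote the characterization of Parseval frames for generalized shift-invariant systems satisfying the LIC (\cite[Theorem~6.6]{HLW}, see also \cite[Theorem~1.1]{MR2264324}), specializing its $t_\alpha$-equations to the wavelet case to recover \eqref{ch1}.

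For the forward implication, suppose $\af{\Psi,A,\Ga}$ is a Parseval frame; in particular it is Bessel with bound $1$. It is classical that a Bessel wavelet system has an essentially bounded Calder\'on sum: localizing the Bessel inequality against $f$ with $\hat f=\charfct{\B(\xi_0,\rho)}$, keeping only the scale-diagonal contributions (legitimate once $\rho$ is small enough relative to any fixed finite range of scales, so that the $\Gamma$-periodization cross terms vanish), and letting $\rho\to 0$ at Lebesgue points yields $\sum_{l=1}^L\sum_{j\in\Z}\abs{\hat\psi_l(B^{-j}\xi)}^2\le 1$ for a.e.\ $\xi$; see \cite{MR1946982}. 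Hence the Calder\'on sum lies in $L^\infty(\R^n)\subset L^1_{\mathrm{loc}}(\R^n\setminus E)$, so \eqref{eq:L1-local-int} holds, and by Theorem~\ref{thm:lic-and-calderon-under-volume-bound} (applicable since $(B,\LG^*)$ satisfies \eqref{eq:lca}) the system $\af{\Psi,A,\Ga}$ satisfies the LIC.

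For the converse, suppose \eqref{ch1} holds for every $\alpha\in\Ga^*$. Taking $\alpha=0$ — where the constraint $B^{-j}\alpha\in\Ga^*$ is vacuous — gives $\sum_{l=1}^L\sum_{j\in\Z}\abs{\hat\psi_l(B^{-j}\xi)}^2=1$ for a.e.\ $\xi$, so again the Calder\'on sum is in $L^\infty\subset L^1_{\mathrm{loc}}(\R^n\setminus E)$ and Theorem~\ref{thm:lic-and-calderon-under-volume-bound} yields the LIC. In either direction we may now apply the Parseval-frame characterization for GSI systems under the LIC: viewing $\af{\Psi,A,\Ga}$ as the GSI system with translation lattices $A^{-j}\Ga$ and generators $\dila[A^j]\psi_l$, whose Fourier transform is $\xi\mapsto\abs{\det A}^{-j/2}\hat\psi_l(B^{-j}\xi)$, the dual lattice of $A^{-j}\Ga$ is $B^j\Ga^*$ (so a given $\alpha$ contributes at scale $j$ precisely when $B^{-j}\alpha\in\Ga^*$), and the determinant normalizations cancel, so the abstract equations $t_\alpha(\xi)=\delta_{\alpha,0}$ become exactly \eqref{ch1}. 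This last substitution is routine.

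The only point needing genuine care — and the closest thing to an obstacle — is the forward direction's use of the ``Bessel implies essentially bounded Calder\'on sum'' fact, which must be in place \emph{before} one can appeal to Theorem~\ref{thm:lic-and-calderon-under-volume-bound}; a reader expecting an easy derivation of the LIC directly from the frame property should be warned that this intermediate step is where the classical frequency-localization argument is invoked. Everything else is bookkeeping, since the substantive input, namely that the LIC is equivalent to Calder\'on integrability whenever the lattice counting estimate holds, is precisely the content of Theorem~\ref{thm:lic-and-calderon-under-volume-bound}.
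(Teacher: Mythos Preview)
Your proposal is correct and follows essentially the same route as the paper's proof: in both directions you obtain boundedness of the Calder\'on sum (from the Bessel property via \cite[Proposition~4.1]{HLW} in the paper, from your localized-ball argument here; and from the $\alpha=0$ equation in the converse), upgrade to the LIC via Theorem~\ref{thm:lic-and-calderon-under-volume-bound}, and then invoke the GSI characterization under the LIC (the paper cites \cite[Theorem~4.2]{HLW} rather than \cite[Theorem~6.6]{HLW}, but the content is the same after the specialization you describe). The one place where the paper is slightly cleaner is that it treats ``Bessel $\Rightarrow$ bounded Calder\'on sum'' as a black-box GSI fact requiring no assumptions on $A$ or $\Gamma$, whereas your sketched frequency-localization argument would need a bit more care to make rigorous for general (non-expansive) $A$.
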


\begin{proof}
It is well-known that if a wavelet system is a Bessel sequence with bound $C>0$, then the Calder\'on formula is bounded by $C$, i.e.,
\begin{equation}\label{ch2}
\sum_{l=1}^L \sum_{j\in \Z} |\hat\psi_l(B^{-j}\xi)|^2 \le C\qquad\text{for a.e. }\xi\in \R^n.
\end{equation}
This result holds without any a priori assumptions on the dilation $A$ and the lattice $\Ga$, as it is a consequence of a more general result that holds for generalized shift-invariant (GSI) systems, see \cite[Proposition 4.1]{HLW}. Thus, if $\Psi$ is a Parseval frame, then \eqref{ch2} holds for $C=1$. Likewise, if  \eqref{ch1} holds, then by setting $\alpha=0$, we also have \eqref{ch2} for $C=1$. In either case, the LIC holds for $\af{\Psi,A,\Gamma}$ in light of Theorem~\ref{thm:lic-and-calderon-under-volume-bound}. Consequently, the general machinery of Hern\'andez, Labate, and Weiss \cite{HLW} applies. By \cite[Theorem 4.2]{HLW}, the wavelet system $\af{\Psi,A,\Gamma}$ is a Parseval frame if and only if \eqref{ch1} holds.
\end{proof}

If the wavelet system generated by $\Psi=\{\psi_1,\dots, \psi_L \}$ is a
Parseval frame, it is an orthonormal basis precisely when $\norm{\psi_\ell}=1$ for
each $\ell=1,\dots, L$. Hence, Theorem~\ref{ch} also characterizes orthonormal
wavelets. Moreover, Theorem~\ref{ch} generalizes to dual wavelet frames. Indeed,
using \cite[Theorem 9.1]{HLW}, one can easily show the generalization of
\cite[Theorem 9.6]{HLW} from the setting of dilations expanding on a subspace to
the lattice counting estimate \eqref{eq:lca}.

\begin{theorem}\label{thm:ch-dual}
 Suppose that $(B,\Gamma^*)$ satisfies the lattice counting estimate
 \eqref{eq:lca}. Suppose that the wavelet systems $\af{\Psi,A,\Ga}$  and
 $\af{\Phi,A,\Ga}$ generated by
 $\Psi = \set{\psi_1, \dots, \psi_L} \subset L^2(\R^n)$ and $\Phi = \set{\phi_1, \dots, \phi_L} \subset L^2(\R^n)$, resp., are Bessel sequences. Then they are dual frames if and only if
for all $\alpha\in\Ga^*$ we have
\begin{equation}\label{eq:ch1-dual}
\sum_{l=1}^L \sum_{j\in \Z, B^{-j}\alpha \in \Ga^*} \hat\psi_l(B^{-j}\xi) \overline{\hat \phi_l(B^{-j}(\xi+\alpha))} = \delta_{\alpha,0}
\qquad\text{for a.e. }\xi\in \R^n.
\end{equation}
\end{theorem}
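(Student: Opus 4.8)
The plan is to mirror the proof of Theorem~\ref{ch}, replacing the Parseval-frame machinery of \cite[Theorem~4.2]{HLW} by its dual counterpart \cite[Theorem~9.1]{HLW}, with Theorem~\ref{thm:lic-and-calderon-under-volume-bound} again doing the essential work of trading the ``expanding on a subspace'' hypothesis for the lattice counting estimate. Throughout, let $E$ be as in Definition~\ref{def:def-of-E-and-F}, and note that we are in the standing situation $A\in\mathrm{GL}(n,\R)$ with $\abs{\det A}>1$ forced by~\eqref{eq:lca}.

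First I would exploit the Bessel hypotheses. Since $\af{\Psi,A,\Ga}$ and $\af{\Phi,A,\Ga}$ are Bessel sequences, the general GSI estimate \cite[Proposition~4.1]{HLW} shows that the Calder\'on sums $\sum_{l=1}^L\sum_{j\in\Z}\abssmall{\hat\psi_l(B^{-j}\xi)}^2$ and $\sum_{l=1}^L\sum_{j\in\Z}\abssmall{\hat\phi_l(B^{-j}\xi)}^2$ are bounded for a.e.\ $\xi\in\R^n$; in particular both lie in $L^1_{\mathrm{loc}}(\R^n\setminus E)$, so $\Psi$ and $\Phi$ each satisfy the Calder\'on integrability condition~\eqref{eq:L1-local-int}. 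This step needs no hypothesis on $A$ or $\Ga$.

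Next I would invoke the lattice counting estimate. Since $(B,\Ga^*)$ satisfies~\eqref{eq:lca}, the implication (i)$\Rightarrow$(ii) of Theorem~\ref{thm:lic-and-calderon-under-volume-bound}, applied separately to $\Psi$ and to $\Phi$, upgrades~\eqref{eq:L1-local-int} to the full LIC~\eqref{eq:af-LIC} for both $\af{\Psi,A,\Ga}$ and $\af{\Phi,A,\Ga}$. The pointwise bound $\abssmall{\hat\psi_l(B^{-j}\xi)}\,\abssmall{\hat\phi_l(B^{-j}\xi)}\le \tfrac12\bigl(\abssmall{\hat\psi_l(B^{-j}\xi)}^2+\abssmall{\hat\phi_l(B^{-j}\xi)}^2\bigr)$ then dominates the mixed local integrability expression appearing in the dual GSI framework by the sum of the two individual LIC expressions, each finite for $f\in\mathcal D_E$; hence the dual LIC holds for the pair $(\Psi,\Phi)$.

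With the dual LIC established, the dual machinery of Hern\'andez, Labate, and Weiss applies verbatim: by \cite[Theorem~9.1]{HLW} two Bessel GSI systems satisfying the dual LIC form dual frames if and only if the associated characterizing equations hold, and, specialized to the wavelet systems $\af{\Psi,A,\Ga}$ and $\af{\Phi,A,\Ga}$, these equations are exactly~\eqref{eq:ch1-dual}. This is the same route by which \cite[Theorem~9.6]{HLW} is obtained for dilations expanding on a subspace. The only point requiring care — and the step I expect to be the main obstacle — is confirming that the hypothesis of \cite[Theorem~9.1]{HLW} is precisely this mixed/dual LIC and that it is indeed implied, via the Cauchy--Schwarz domination above, by the two individual LICs delivered by Theorem~\ref{thm:lic-and-calderon-under-volume-bound}; everything else is a direct transcription of the Parseval argument of Theorem~\ref{ch}.
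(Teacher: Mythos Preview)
Your proposal is correct and follows precisely the route the paper indicates: the paper does not write out a proof but merely states that, using \cite[Theorem~9.1]{HLW}, one obtains the dual generalization of \cite[Theorem~9.6]{HLW} in the same way Theorem~\ref{ch} was derived from \cite[Theorem~4.2]{HLW}. Your steps (Bessel $\Rightarrow$ bounded Calder\'on sum via \cite[Proposition~4.1]{HLW}, then LIC via Theorem~\ref{thm:lic-and-calderon-under-volume-bound}, then \cite[Theorem~9.1]{HLW}) are exactly this argument spelled out; the Cauchy--Schwarz domination you include is a harmless safeguard, since the hypothesis of \cite[Theorem~9.1]{HLW} is already met once both individual LICs hold.
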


Theorem~\ref{thm:lic-and-calderon-under-volume-bound} shows that the lattice counting estimate \eqref{eq:lca} is the optimal hypothesis under which one should expect the characterizing equations \eqref{ch1} to hold. Indeed, if $(B,\Ga^*)$ does not satisfy \eqref{eq:lca}, then the LIC must fail for some choice of $\Psi$ and the known techniques collapse. However, this does not completely close the problem since the LIC is merely a convenient sufficient condition for showing characterization results. In particular, the following problem raised in \cite{BR, Sp} remains open.

\begin{conjecture}\label{calderon} Suppose that wavelet system $\af{\Psi, A,\Ga}$ is an orthonormal basis, or more generally a Parseval frame
  for $L^2(\R^n)$. Then the Calder\'on sum formula holds 
\begin{equation}\label{ch7}
\sum_{l=1}^L \sum_{j\in \Z} |\hat\psi_l(B^{-j}\xi)|^2 =1\qquad\text{for a.e. }\xi\in \R^n.
\end{equation}
\end{conjecture}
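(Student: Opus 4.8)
Toward a proof of Conjecture~\ref{calderon}, observe first that the upper Calder\'on bound $\sum_{l=1}^L\sum_{j\in\Z}\abssmall{\hat\psi_l(B^{-j}\xi)}^2\le 1$ is~\eqref{ch2} with $C=1$ and holds with no hypothesis on $(A,\Ga)$, so only the reverse pointwise inequality $\sum_{l,j}\abssmall{\hat\psi_l(B^{-j}\xi)}^2\ge 1$ (for a.e.\ $\xi$) remains to be shown. When $\abs{\det A}>1$, combining Theorem~\ref{ch} with Theorem~\ref{thm:lic-and-calderon-under-volume-bound} already settles this whenever $(B,\Ga^*)$ satisfies the lattice counting estimate~\eqref{eq:lca}: then the LIC~\eqref{eq:af-LIC} holds for $\af{\Psi,A,\Ga}$, a Parseval frame must satisfy~\eqref{ch1}, and the instance $\alpha=0$ of~\eqref{ch1} is exactly~\eqref{ch7}. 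So the only remaining case is a pair $(B,\Ga^*)$ for which~\eqref{eq:lca} fails; since the Calder\'on sum lies in $L^\infty(\R^n)\subset L^1_{\text{loc}}(\R^n)$, almost every $\xi_0$ is a Lebesgue point of it, and it suffices to prove the lower bound at such points.

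First I would fix a Lebesgue point $\xi_0$ and test the Parseval identity $\norm{f}^2=\sum_{l=1}^L\sum_{j\in\Z}\sum_{\gamma\in\Ga}\abssmall{\innerprods{f}{D_{A^j}T_\gamma\psi_l}}^2$ against the normalized functions $f=f_\eps$ with $\hat f_\eps=\abssmall{\B(\xi_0,\eps)}^{-1/2}\chi_{\B(\xi_0,\eps)}$, so that $\norm{f_\eps}=1$. Carrying out Plancherel on the lattice inside each fixed-$(j,l)$ layer --- the standard unfolding that produces the expression $L(f)$ of~\eqref{eq:af-LIC} --- the Parseval identity takes the form
\[
1=\norm{f_\eps}^2=\int_{\R^n}\abssmall{\hat f_\eps(\xi)}^2\sum_{l=1}^L\sum_{j\in\Z}\abssmall{\hat\psi_l(B^{-j}\xi)}^2\,\mathrm{d}\xi\;+\;R(f_\eps),
\]
where $R(f_\eps)$ is the polarization of $L(f_\eps)$ restricted to the dual-lattice indices $\mlat\in\Ga^*\setminus\{0\}$. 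The first term is the average of the Calder\'on sum over $\B(\xi_0,\eps)$ and converges to $\sum_{l,j}\abssmall{\hat\psi_l(B^{-j}\xi_0)}^2$ as $\eps\to0$. Hence, if $R(f_\eps)\to0$ along this family, then~\eqref{ch7} holds at $\xi_0$, and we are done.

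The hard part --- and the reason~\eqref{ch7} is only conjectural --- is controlling $R(f_\eps)$. A non-zero term there forces both $\xi$ and $\xi+B^j\mlat$ into $\B(\xi_0,\eps)$, hence $B^j\mlat\in\B(0,2\eps)$, so $R(f_\eps)$ is governed by the count $\sum_{j\in\Z}\cardbig{(B^j\Ga^*\setminus\{0\})\cap\B(0,2\eps)}$ weighted by integrals of $\abssmall{\hat\psi_l}^2$ over small sets. When~\eqref{eq:lca} holds this count is finite, the series defining $R(f_\eps)$ converges absolutely, and a Cauchy--Schwarz estimate combined with $\psi_l\in L^2(\R^n)$ and the $L^1_{\text{loc}}$ Calder\'on bound gives $R(f_\eps)=o(1)$ --- which merely recovers the case already handled. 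Without~\eqref{eq:lca} the series is at best conditionally convergent: on the non-expanding subspace $E$ of Definition~\ref{def:def-of-E-and-F}, infinitely many scales $j$ contribute comparably sized terms, and the cancellation that must occur among them is invisible to absolute-value estimates. Getting past this seems to require either a family of test functions adapted to the orbit geometry of $B$ on $E$, so that the off-diagonal couplings along $E$ are tamed scale by scale, or an argument that bypasses the pointwise fiberization altogether --- for instance a dual-Gramian / weak-$\ast$ compactness argument showing directly that the frame operator, being the identity, has constant diagonal symbol. In parallel, I would try to dispose of the cases left open by Theorem~\ref{lceexp} one normal form at a time (Proposition~\ref{thm:exp-subspace-charac}): a dilation with an eigenvalue of modulus $<1$, and a dilation with a non-trivial Jordan block on the unit circle, using the explicit representations~\eqref{sh3} and~\eqref{sh5} --- either by exhibiting a Parseval wavelet frame that violates~\eqref{ch7}, which would refute the conjecture, or by a direct verification. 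The absence of any known counterexample is the main reason to expect~\eqref{ch7} in general; the case $\abs{\det A}=1$, where the lattice counting estimate is not even defined, has a different flavour and would be treated separately.
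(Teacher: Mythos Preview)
The statement is labelled a \emph{conjecture} in the paper and is not proved there; immediately after stating it the authors write that ``Conjecture~\ref{calderon} remains a surprisingly intractable problem'' and that it is not even known whether the existence of a Parseval wavelet for $(A,\Ga)$ forces $\abs{\det A}\ne 1$. So there is no paper proof to compare against, and your write-up is, appropriately, a discussion of partial progress and obstacles rather than a claimed proof.

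What you do establish matches the paper exactly: the upper bound is~\eqref{ch2} with $C=1$, and under the lattice counting estimate~\eqref{eq:lca} the full equality~\eqref{ch7} follows from Theorem~\ref{ch} by taking $\alpha=0$ in~\eqref{ch1}. The paper records precisely this partial result in the paragraph following the conjecture.

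Your proposed attack on the remaining case --- testing the Parseval identity against localized $\hat f_\eps$ and isolating the diagonal term --- is the natural one, and you correctly locate the genuine obstruction: the decomposition $1=(\text{Calder\'on average})+R(f_\eps)$ you write down is itself obtained by the interchange of sums that the LIC is designed to justify, so when~\eqref{eq:lca} fails it is the identity that is in question, not merely the size of $R(f_\eps)$. Your remark that the off-diagonal series is then ``at best conditionally convergent'' is exactly the point, and neither the paper nor the existing literature offers a way around it. The case $\abs{\det A}=1$ is, as you note and as the paper confirms, also open.
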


Theorem~\ref{ch} implies that \eqref{ch7} is true when the lattice counting
estimate holds; in particular, the conjecture is true in one dimension. 
Moreover, this conjecture is also valid for continuous wavelets where
translates along a fixed lattice $\Ga$ are replaced by translates along $\R^n$,
see \cite[Theorem 1.1]{lwww} and \cite[Proposition 1]{MR2249311}.  In particular, by Lemma~\ref{thm:existence-calderon}, we must necessarily have $\abs{\det{A}}
 \ne 1$. For continuous wavelet systems $\{T_\gamma
 D_{A^j}\psi\}_{\gamma \in \R^n,j \in \Z, \psi \in \Psi}$ with respect to discrete
 group of dilations $\{A^j:j\in \Z\}$ and translations along $\R^n$,
 the Calder\'on sum formula
\eqref{ch7} is a necessary and sufficient condition for $\{T_\gamma
 D_{A^j}\psi\}_{\gamma \in \R^n,j \in \Z, \psi \in \Psi}$ to be a
continuous Parseval frame, see \cite[Theorem 2]{MR2249311}. 
Actually, this is true for any continuous translation
invariant system $\{T_\gamma g_p\}_{\gamma \in \R^n,p\in P}$, where
$(P,\mu_P)$ is a $\sigma$-finite measure space and $\{g_p \}_{p \in
  P}\subset L^2(\R^n)$, see \cite{JakobsenReproducing2014}. In this case, the (generalized) Calder\' on
formula is $\int_{P} \abs{\hat{g}_p(\xi)}^2 d\mu_P(p) = 1$ for
a.e. $\xi \in \R^n$. 

In contrast to the continuous case, Conjecture~\ref{calderon} remains a
surprisingly intractable problem. In particular, it is not even known whether
the existence of discrete orthonormal (or Parseval) wavelet $\Psi$ for the pair
$(A,\Ga)$ implies $\abs{\det{A}}\ne 1$. Finally,
Conjecture~\ref{calderon} is a special case of the stronger conjecture
that if the wavelet system $\af{\Psi, A,\Ga}$ is a frame for $L^2(\R^n)$ with bounds $C_1$ and
  $C_2$, then 
 $C_1 \le \sum_{l=1}^L \sum_{j\in \Z} |\hat\psi_l(B^{-j}\xi)|^2 \le
 C_2$ for
a.e. $\xi \in \R^n$.

\section{Ubiquity of the lattice counting estimate}

In this section we will show that the lattice counting estimate holds almost surely for generic choices of dilations and lattices.
By Theorem~\ref{lceexp}, for any dilation $A$ that is not expanding on a subspace, one can find a full-rank lattice $\LG$ for which the lattice counting estimate (\ref{eq:lc}) fails. 
On the other hand, we shall show in this section that the lattice counting estimate (\ref{eq:lc}) holds for any dilation $A$ with $\abs{\det{A}}>1$ for almost every choice of a lattice $\LG$. We shall establish similar results on the existence of MSF wavelets.
Our techniques rely on the work of Skriganov \cite{Sk2, Sk3} on the logarithmically small errors in the lattice point problem for polyhedra.

For $x=(x_1,\ldots, x_n) \in \R^n$, define the norm form $\Nm x= x_1x_2 \cdots x_n$. Let 
\[
\Nm \La= \inf\{\abs{\Nm x}: x \in \Lambda \setminus \{0\} \}.
\]
A lattice $\La$ is said to be \textit{admissible} if $\Nm \La>0$. For such
lattices Skriganov \cite{Sk1} has established the following asymptotic bound on the number
of lattice points inside a dilation of a parallelepiped $\Pi \subset \R^n$  with edges parallel to the coordinates axes:    
\begin{equation}\label{eq:tpi}
\#(\Lambda \cap t\Pi) = t^n\abs{\Pi}+O((\log t)^{n-1})\qquad
\text{as }t\to\infty.
\end{equation}

Let $\mathcal L_n$ be the set of unimodular lattices (with volume = 1) which can
be identified with 
\[
\mathcal L_n = \mathrm{SL}(n,\R)/\mathrm{SL}(n,\Z).
\]
Even though the subset of admissible lattices is dense in $\mathcal L_n$, it has
zero measure with respect to the invariant (probability) measure $\mu_{\mathcal L}$
on $\mathcal L_n$, see \cite{Sk2}. Hence, admissible lattices are rare, i.e.,
$\mu_{\mathcal L}$-almost surely we have $\Nm \La =0$. 

Skriganov \cite{Sk2} introduced a diophantine characteristic of a lattice $\Lambda$, which measures the rate at which $\Nm \La=0$ is achieved, defined by
\begin{equation}\label{nu}
\nu(\Lambda,\rho) = \min \{ \abs{\Nm \ga}: \ga \in \Lambda, 0<|\ga|<\rho\}, \qquad \rho>||\Lambda||:=\min\{|\ga|: \ga\in \Lambda \setminus\{0\} \}.
\end{equation}
The following result, Lemma~\ref{thm:sk}, plays a key role in showing the main result of \cite{Sk2}  which says that the bound \eqref{eq:tpi} holds when $\Pi$ is replaced by any compact polyhedron for almost every choice of $\Lambda$, albeit with a slightly worse exponent $(\log t)^{n-1+\ve}$, $\ve>0$. 

Lemma~\ref{thm:sk}  is a slight generalization of \cite[Lemma 4.3]{Sk2} due to presence of a matrix $P \in \mathrm{GL}(n,\R)$. This change corresponds to a more general norm form $x \mapsto \Nm(Px)$.

\begin{lemma}\label{thm:sk}
 Let $\La \in \mathcal L_n$ be an arbitrary lattice and let $P\in \mathrm{GL}(n,\R)$. Then for almost all orthogonal matrices $U \in \mathrm{SO}(n)$ (in the sense of the Haar measure on $\mathrm{SO}(n)$) we have
\begin{equation}\label{sk0}
\nu(P U \La, \rho) > (\log \rho)^{1-n-\ve} \qquad\text{as } \rho \to \infty,
\end{equation}
where $\ve>0$ is arbitrary.
\end{lemma}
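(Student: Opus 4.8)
The plan is to reduce the statement to the original result of Skriganov, \cite[Lemma 4.3]{Sk2}, which is exactly the case $P=\id$. The key observation is that the norm form $x \mapsto \Nm(Px)$ is, after a harmless change of variables, again a norm form composed with a \emph{fixed} invertible linear map, and Skriganov's argument is insensitive to this. More precisely, I would first recall the structure of Skriganov's proof: for a unimodular lattice $\La$, one considers the orbit $\mathrm{SO}(n)\cdot\La$ and shows that for almost every $U$, the lattice $U\La$ is ``generic'' with respect to the coordinate norm form in the sense that $\nu(U\La,\rho)$ decays no faster than $(\log\rho)^{1-n-\ve}$. The mechanism is a second-moment (or Borel–Cantelli) estimate over $\mathrm{SO}(n)$: one bounds $\int_{\mathrm{SO}(n)} (\text{number of }\ga\in U\La \text{ with }|\ga|<\rho \text{ and }|\Nm\ga|<\delta)\, dU$ and shows it is small when $\delta$ is a suitable negative power of $\log\rho$, using that $\Nm$ vanishes only on the coordinate hyperplanes and that integrating over rotations spreads any fixed lattice vector out.

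The main step is then to push $P$ through this machinery. Write $\Nm(Px)$ and observe that for any lattice $M$ (here $M=U\La$), the quantity $\min\{|\Nm(P\ga)| : \ga\in M,\ 0<|\ga|<\rho\}$ equals $\min\{|\Nm y| : y \in PM,\ 0<|P^{-1}y|<\rho\}$. Since $\|P^{-1}\|^{-1}|y| \le |P^{-1}y| \le \|P^{-1}\||y|$, the condition $|P^{-1}y|<\rho$ is sandwiched between $|y|<\rho/\|P^{-1}\|$ and $|y|<\rho\|P\|$, so up to replacing $\rho$ by a constant multiple (which only changes $\log\rho$ by an additive constant and is absorbed into the $\ve$), we have
\[
\nu(PU\La,\rho) \asymp \nu_{\mathrm{eucl}}\bigl(P U\La, \rho'\bigr),
\]
where $\nu_{\mathrm{eucl}}$ is the same quantity defined using the Euclidean ball of $PU\La$ itself. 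Now $PU\La$ need not be unimodular, but $|\det(PU)| = |\det P|$ is a fixed constant, so $(|\det P|^{-1/n} P)\,U\La$ is unimodular and we may normalize. Thus it suffices to prove the claim with $P$ replaced by an element of $\mathrm{SL}(n,\R)$ and $\La\in\mathcal L_n$.

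At this point the cleanest route is to revisit Skriganov's proof of \cite[Lemma 4.3]{Sk2} and check that every estimate goes through with the coordinate norm form $\Nm$ replaced by $y\mapsto\Nm(Py)$ for fixed $P\in\mathrm{SL}(n,\R)$: the only geometric facts used are (a) $|\Nm(Py)| \le \|P\|^n |y|^n$, so small norm-form value forces proximity to the variety $\{\Nm(Py)=0\}=P^{-1}(\bigcup_i\{y_i=0\})$, a fixed finite union of hyperplanes through the origin; and (b) averaging over $U\in\mathrm{SO}(n)$ a fixed nonzero vector $U\ga$ has its ``distance to this union of hyperplanes'' controlled in measure, exactly as for the coordinate hyperplanes, since $\mathrm{SO}(n)$ acts transitively enough on directions. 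The resulting second-moment bound, combined with Borel–Cantelli along a dyadic sequence $\rho=2^m$ (and monotonicity of $\nu(\cdot,\rho)$ in $\rho$ to fill the gaps), yields \eqref{sk0} for almost every $U$. I expect the main obstacle to be purely expository: one must either reproduce enough of Skriganov's argument to make the insertion of $P$ transparent, or cite the precise lemmas in \cite{Sk2} at the granularity where the coordinate norm form can be swapped for $\Nm\circ P$ without altering the constants in a way that matters; the actual mathematical content beyond \cite{Sk2} is the elementary sandwiching of balls and the observation that $\{\Nm(Py)=0\}$ is still a union of hyperplanes, so no genuinely new estimate is needed.
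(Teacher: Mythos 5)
Your proposal is correct and follows essentially the same route as the paper: one cannot reduce to the $P=\mathbf I$ case of Skriganov's lemma because the $\mathrm{SO}(n)$-average sits between $P$ and $\La$, so one reruns his Borel--Cantelli argument with the norm form $\Nm\circ P$, and the only new ingredient is that the spherical measure of the sublevel set $\set{x\in S^{n-1}: \abs{\Nm(Px)}<\theta}$ is comparable to that for $\Nm$ itself (the paper obtains this from the bounded-Jacobian diffeomorphism $x\mapsto Px/\abs{Px}$ of $S^{n-1}$, which is the precise form of your observation that $\set{\Nm(Py)=0}$ is still a union of hyperplanes in general position). The radius sandwiching via $\normsmall{P^{-1}}$ that you describe is exactly the paper's final step.
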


\begin{proof}
We shall follow the proof of \cite[Lemma 4.2]{Sk2} with some necessary modifications. Suppose that $\omega:[||\Lambda||,\infty) \to (0,\infty)$ is an arbitrary monotone decreasing function satisfying
\begin{equation}\label{sk2}
\sum_{\ga\in \Lambda \setminus\{0\}} |\gamma|^{-n} \bigg( \log\frac{ |\gamma|^n}{\omega(|\gamma|)} \bigg)^{n-2} \omega(|\gamma|) <\infty.
\end{equation}

Let $\sigma$ be the unique $\mathrm{SO}(n)$-invariant measure on the unit sphere
$S^{n-1}=\{x\in \R^n: |x|=1\}$ that is normalized such that $\sigma(S^{n-1})=1$. In particular, for any $x\in S^{n-1}$ we have
\begin{equation}\label{sk8}
\sigma(V) = \mu_{SO} ( \{ U \in \mathrm{SO}(n): Ux \in V\})
\qquad\text{for any open set }
V \subset S^{n-1}.
\end{equation}
Given $P\in \mathrm{GL}(n,\R)$, define
\[
s_P(\theta)= \sigma(\{x\in S^{n-1}: |\Nm (Px)|<\theta\}).
\]
Let $\mathbf I$ be $n\times n$ identity matrix.
By the estimate (4.21) in the proof of \cite[Lemma 4.2]{Sk2} we have
\begin{equation}\label{sk1}
s_{\mathbf I}(x) < c(n) \theta \bigg(1+\log \frac1\theta \bigg)^{n-2} \qquad\text{for } 0<\theta<\frac 1{\sqrt{n}},
\end{equation}
where a positive constant $c(n)$ depends only on the dimension $n$. 

Note that the mapping $\phi: S^{n-1} \to S^{n-1}$ given by $\phi(x)= Px/|Px|$ is
a smooth diffeomorphism. Since $S^{n-1}$ is compact, the jacobian of $\phi$ is
bounded from above and below by positive constants. Thus, by the change of
variables formula, there exists a constant $c=c(P)>0$ depending on $P$ such that 
\[
\frac1c \sigma(V) \le \sigma(\phi^{-1}(V))  \le c \sigma(V)
\qquad\text{for any open set }
V \subset S^{n-1}.
\]
Consequently, we have
\begin{equation}\label{sk6}
\begin{aligned}
s_P(\theta) & \le \sigma(\{x\in S^{n-1}: |\Nm(Px)|<\theta ||P^{-1}||^n |Px|^n \})
\\
& = \sigma(\{x\in S^{n-1}: |\Nm(Px/|Px|)|<\theta ||P^{-1}||^n \}) 
\le c s_{\mathbf I}(\theta ||P^{-1}||^n).
\end{aligned}
\end{equation}
Combining \eqref{sk1} and \eqref{sk6} yields
\begin{equation}\label{sk7}
\begin{aligned}
s_P(\theta) & < c(P) c(n) ||P^{-1}||^n \theta  \bigg(1+\log \frac1{\theta ||P^{-1}||^n} \bigg)^{n-2}
\\
& < c(n,P) \theta \bigg(1+\log \frac1\theta \bigg)^{n-2}
\qquad\text{for } 
0<\theta<\frac 1{\sqrt{n}||P^{-1}||^n},
\end{aligned}
\end{equation}
where the positive constant $c(n,P)$ depends on $n$ and $P$.

For any $\gamma \in \Lambda \setminus \{0\}$ and $\theta>0$, let
\[
m_\gamma(\theta) = \mu_{SO}(\{U \in \mathrm{SO}(n): |\Nm (PU\gamma)| < \theta \}).
\]
By \eqref{sk8} and \eqref{sk7}, we have
\begin{equation}\label{sk3}
m_\gamma(\theta) = s_P\bigg( \frac{\theta}{|\ga|^n} \bigg) < c(n,P) |\ga|^{-n} \theta \bigg( 1+ \log \frac{|\ga|^n}\theta \bigg)^{n-2} \qquad\text{for }0<\theta<\frac {||\Lambda||^n}{\sqrt{n}||P^{-1}||^n}.
\end{equation}
Using \eqref{sk2} and \eqref{sk3}, the Borel-Cantelli Lemma implies that for almost all $U \in \mathrm{SO}(n)$ 
\begin{equation}\label{sk4}
|\Nm(PU\gamma)| \ge \omega(|\gamma|)
\qquad\text{for all but finitely many }\gamma \in \Lambda \setminus \{0\}.
\end{equation}
Observe also that
for every $\gamma \in \Lambda \setminus \{0\}$ we have 
\begin{equation}\label{sk9}
|\Nm(P U \gamma)|>0 \qquad\text{for almost all }U\in \mathrm{SO}(n).
\end{equation}
Note that 
\begin{equation}\label{sk5}
\begin{aligned}
\nu(P U \Lambda,\rho) 
& = \min \{ \abs{\Nm (PU\ga)}: \ga \in \Lambda, 0<|PU\ga|<\rho\}
 \\
& \le \min \{ \abs{\Nm (PU\ga)}: \ga \in \Lambda, 0<|\ga|<||P^{-1}||\rho\}.
\end{aligned}
\end{equation}
Let $\{\ga_1,\ldots,\ga_q\} \subset \Lambda$ be the exceptional set, which depends on $U$, where \eqref{sk4} fails. 
Combining \eqref{sk4}, \eqref{sk9}, and \eqref{sk5} yields $\rho_0>0$ such that
\[
\begin{aligned}
\nu(P U \Lambda,\rho) 
& \ge \min\{\omega(||P^{-1}||\rho),|\Nm(P U \gamma_1)|,\ldots, |\Nm(PU \gamma_q)| \} \\
& = \omega(||P^{-1}||\rho) \qquad\text{for }\rho>\rho_0.
\end{aligned}
\]
Since the function $\omega(\rho)=(\log \rho)^{1-n-\ve}$ with $\ve>0$ satisfies \eqref{sk2}, we obtain the bound \eqref{sk0}. This completes the proof of Lemma~\ref{thm:sk}.
\end{proof}

Skriganov's Lemma~\ref{thm:sk} plays a key role in the proof of the following main result of this section.

\begin{theorem}\label{thm:ubl}
Let $B$ be any matrix in $\mathrm{GL}(n,\R)$ with $\abs{\det{B}}>1$. Then for
any lattice $\La \in \mathcal L_n$, the pair $(B,U\La)$ satisfies the lattice counting estimate \eqref{eq:lc} for almost all (in the sense of Haar measure) $U\in \mathrm{SO}(n)$.
\end{theorem}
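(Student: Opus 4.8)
The plan is to reduce the lattice counting estimate \eqref{eq:lc} for the pair $(B,U\La)$ to Skriganov's diophantine bound \eqref{sk0} via a comparison of the Euclidean ball with parallelepipeds adapted to the eigenstructure of $B$. First I would put $B$ into real Jordan form: write $\R^n = \bigoplus_k W_k$ where $W_k$ are the generalized real eigenspaces, so that on each $W_k$ the action of $B^j$ is $\lambda_k^j$ times a unipotent (and possibly a rotation), and the norm of $B^j x$ for $x\in W_k$ grows like $|\lambda_k|^j$ up to polynomial-in-$j$ factors. The key quantitative input is that the region $B^j(\B(0,r))$ is contained in a box whose edge in the $W_k$-direction has length $\lesssim |\lambda_k|^j \cdot \mathrm{poly}(|j|) \cdot r$; after an affine change of coordinates (absorbed into the matrix $P$ of Lemma~\ref{thm:sk}) these boxes have edges parallel to the coordinate axes, so the norm form $\Nm$ on them is the product of the edge lengths, which is $\lesssim |\det B|^j \cdot \mathrm{poly}(|j|) \cdot r^n$.

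Next I would count lattice points in such a box. Any point $\gamma \in U\La$ lying in a box of side lengths $\ell_1,\dots,\ell_n$ with $|\gamma| \le \rho$ satisfies $|\Nm(P^{-1}\gamma)| \le \prod_i \ell_i$ after straightening, while Skriganov's bound gives $|\Nm(P^{-1}\gamma)| \ge \nu(P^{-1}U\La,\rho) > (\log\rho)^{1-n-\ve}$ for all nonzero $\gamma$ with $|\gamma|<\rho$, once $\rho$ is large. The standard geometry-of-numbers argument — here I would invoke Lemma~\ref{thm:prog} on large arithmetic progressions inside a symmetric convex body, exactly as used to justify the $r$-independence remark after Definition~\ref{def:lattice-counting} — shows that if a symmetric convex body $K$ contains more than $C_n$ lattice points of $U\La$ then it contains a nonzero lattice point $\gamma$ with $|\Nm(P^{-1}\gamma)|$ as small as $\mathrm{vol}(K)/\#(U\La\cap K)$ up to a dimensional constant. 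Applying this with $K = B^j(\B(0,r))$ (a symmetric convex body of volume $\asymp |\det B|^j$, comparable to the straightened box): if $N_j := \#(U\La \cap B^j(\B(0,r)))$ is large, then there is a nonzero $\gamma \in U\La \cap B^j(\B(0,r))$ with $|\Nm(P^{-1}\gamma)| \lesssim |\det B|^j \mathrm{poly}(|j|) / N_j$. Combined with the lower bound $|\Nm(P^{-1}\gamma)| > (\log|\gamma|)^{1-n-\ve} \gtrsim (\log \rho_j)^{1-n-\ve}$ where $\rho_j = \sup_{x \in B^j(\B(0,r))}|x| \lesssim |\lambda_{\max}|^j\mathrm{poly}(|j|)r$ for $j \ge 0$ (and analogously for $j<0$), we get $N_j \lesssim |\det B|^j \cdot \mathrm{poly}(|j|) \cdot (\log \rho_j)^{n-1+\ve}$. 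Since $\log\rho_j = O(|j|)$, this is $N_j \lesssim |\det B|^j \cdot \mathrm{poly}(|j|)$; the polynomial factor is harmless only if it can be removed, which brings me to the subtlety.

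The main obstacle is getting rid of the polynomial-in-$j$ factors to reach the clean bound $C\max(1,|\det B|^j)$ with no $j$-dependence. For $j \to +\infty$ this is not an issue because $\mathrm{poly}(j)/|\det B|^j \to 0$ and one absorbs the finitely many exceptional $j$ and a supremum over a convergent sequence into the constant $C$; similarly for $j \to -\infty$ the estimate we need is $N_j \le C$, and there $|\det B|^j \to 0$ fights against $\mathrm{poly}(|j|)$, but crucially $N_j$ is the cardinality of a lattice inside a shrinking-in-some-directions region and the bound becomes $N_j \lesssim \mathrm{poly}(|j|)(\log|j|)^{n-1+\ve}$, which still diverges. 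The resolution — and the real content of the argument — is that for $j<0$ one must work not with the full ball but exploit that $B^j(\B(0,r))$, for $j$ very negative, is thin in the expanding directions of $B^{-1}$: I would split coordinates into the expanding and non-expanding parts of $B^{-1}$ and iterate the counting argument on the non-expanding part, where $B$ has $|\det|$-contribution $\le 1$, thereby reducing to an $(n-1)$- or lower-dimensional lattice-point count whose polynomial growth is then controlled by the diophantine bound applied there; the eigenvalue-of-modulus-one directions are precisely where Jordan blocks could cause linear growth, and Skriganov's estimate $(\log\rho)^{1-n-\ve}$ with the arithmetic-progression lemma is exactly strong enough to beat the $\mathrm{poly}(|j|)$ coming from those blocks since $\log\rho_j = O(\log|j|)$ once we restrict to the non-expanding subspace. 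I expect the write-up to proceed by: (1) normalizing via Jordan form and the matrix $P$; (2) stating the box containment for $B^j(\B(0,r))$; (3) invoking Lemma~\ref{thm:prog} to convert a large point count into a small norm-form value; (4) invoking Lemma~\ref{thm:sk} for the lower bound on the norm form along $P^{-1}U\La$; (5) combining and summing/supping over $j$, handling $j\ge 0$ and $j<0$ separately, with the $j<0$ case requiring the dimensional-reduction trick described above.
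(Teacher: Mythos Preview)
Your overall architecture --- Jordan form via a conjugating matrix $P$, Lemma~\ref{dnm} for the norm-form bound on $B^j(\B(0,r))$, Lemma~\ref{thm:prog} to pass from a large point count to a small norm-form value, and Lemma~\ref{thm:sk} for the lower bound --- matches the paper's. But there is a genuine gap in the $j\ge 0$ case, and the $j<0$ case is handled very differently (and much more simply) in the paper.

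For $j\ge 0$ your direct route yields $N_j \lesssim |\det B|^{j}\,\mathrm{poly}(j)$, and your claim that this is ``not an issue because $\mathrm{poly}(j)/|\det B|^j \to 0$'' is incorrect: the required inequality is $N_j \le C|\det B|^{j}$, i.e.\ $N_j/|\det B|^{j}$ bounded, and a $\mathrm{poly}(j)$ upper bound on that ratio is not bounded. The arithmetic-progression argument, when the progression has rank $s=n$, only gives $|\Nm(P^{-1}v_k)|\lesssim |\det B|^{j(1+\ve)}/N_j$, which combined with Skriganov's $\gtrsim j^{1-n-\ve}$ leaves exactly the polynomial loss you cannot absorb. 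The paper avoids this entirely by bringing in the volume packing lemma (Lemma~\ref{thm:vp}): once the intersection $U\La\cap B^j(\B(0,r))$ \emph{spans} $\R^n$, the upper bound \eqref{eq:vp2} gives the clean $N_j\le C|\det B|^j$ with no $j$-dependent factor. The norm-form and arithmetic-progression machinery is then used only to rule out the non-spanning case: if $s<n$, the lower bound \eqref{eq:vp1} forces $N_j\gtrsim |\det B|^j$, so some $N_k\gtrsim |\det B|^{j/s}$, and homogeneity gives $|\Nm(P^{-1}v_k)|\lesssim |\det B|^{j(1+\ve-n/s)}$ with $n/s\ge n/(n-1)>1$; this decays \emph{exponentially} in $j$ and contradicts the polylogarithmic lower bound \eqref{sk0}. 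You are missing this spanning/packing step, and without it the polynomial factor cannot be removed.

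For $j<0$ your proposed dimensional reduction is unnecessary. The paper observes that any nonzero $v\in U\La\cap B^j(\B(0,r))$ satisfies $|\Nm(P^{-1}v)|\lesssim |\det B|^{j(1-\ve)}\to 0$ exponentially as $j\to -\infty$, while $|P^{-1}v|\lesssim \|B^{-1}\|^{|j|}$, so Skriganov's bound gives $|\Nm(P^{-1}v)|\gtrsim |j|^{1-n-\ve}$; these are incompatible for $j<-j_0$, hence $U\La\cap B^j(\B(0,r))=\{0\}$ and $N_j=1$ for all such $j$. The finitely many intermediate $j$ are absorbed into $C$.
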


To prove Theorem~\ref{thm:ubl} we need two lemmas about intersection of lattices with convex symmetric bodies. The first result is the volume packing lemma which can be found in the book of Tao and Vu \cite[Lemma 3.24 and Lemma 3.26]{TaVu}.

\begin{lemma}\label{thm:vp}
Let $\Gamma \subset \R^n$ be a full rank lattice, and let $\Omega$ be a symmetric convex body in $\R^n$. Then,
\begin{equation}\label{eq:vp1}
\frac{\meas{\Omega}}{2^n \meas{\R^n/\Ga}} \le \card{\Omega \cap \Gamma}.
\end{equation}
In, addition if the vectors $\Omega \cap \Gamma$ linearly span $\R^n$, then
\begin{equation}\label{eq:vp2}
\card{\Omega \cap \Gamma} \le \frac{3^n n! \meas{\Omega}}{2^n \meas{\R^n/\Ga}}.
\end{equation}
\end{lemma}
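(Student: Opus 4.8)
\textbf{The plan is to} prove the two inequalities of the volume packing lemma (Lemma~\ref{thm:vp}) directly from classical geometry-of-numbers arguments. Since this is a standard result appearing in Tao and Vu's book, my proposal reconstructs the natural proof rather than inventing a novel one. The lower bound \eqref{eq:vp1} and the upper bound \eqref{eq:vp2} are genuinely different in character: the lower bound is a pigeonhole/covering argument, while the upper bound uses a packing argument together with the spanning hypothesis to control the number of points via a determinant (fundamental-domain) estimate.

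\textbf{For the lower bound} \eqref{eq:vp1}, I would use the classical argument behind Minkowski's theorem. Consider the scaled body $\tfrac12\Omega$, which has volume $2^{-n}\abs{\Omega}$. If $2^{-n}\abs{\Omega} > \abs{\R^n/\Ga}$, Minkowski's convex body theorem guarantees a nonzero lattice point in $\Omega$; but I need a counting statement, not merely existence. The cleanest route is a pigeonhole/translate-overlap argument: tile $\R^n$ by fundamental domains of $\Ga$ and observe that the translates $\set{\tfrac12\Omega + \gamma : \gamma \in \Ga}$ cannot all be disjoint once their total volume (per unit cell) exceeds $\abs{\R^n/\Ga}$. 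More precisely, the number of lattice points $\gamma$ with $(\tfrac12\Omega+\gamma)\cap\tfrac12\Omega\neq\emptyset$ is at least $\abs{\tfrac12\Omega}/\abs{\R^n/\Ga} = \abs{\Omega}/(2^n\abs{\R^n/\Ga})$, and by symmetry and convexity each such overlap produces a lattice point $\gamma = u - v \in \Omega$ with $u,v\in\tfrac12\Omega$ (since $\tfrac12\Omega - \tfrac12\Omega = \Omega$ by symmetry and convexity). This yields \eqref{eq:vp1}.

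\textbf{For the upper bound} \eqref{eq:vp2}, the spanning hypothesis is essential and is where the main work lies. I would select from $\Omega\cap\Ga$ a basis $v_1,\dots,v_n$ of $\R^n$ by a greedy/volumetric procedure, arranging that the sublattice $\Ga' = \Z v_1 + \dots + \Z v_n$ has index in $\Ga$ controlled by a dimensional constant; Minkowski's second theorem (relating the successive minima of $\Omega$ with respect to $\Ga$ to the ratio $\abs{\Omega}/\abs{\R^n/\Ga}$) is the natural tool to quantify this. Each point of $\Omega\cap\Ga$ lies in the symmetric body $\Omega$, and I would bound the number of such points by covering $2\Omega$ (or an appropriate dilate accommodating the symmetric differences) by translates of a fundamental parallelepiped spanned by the $v_i$; the count of translates needed is comparable to $\abs{2\Omega}/\abs{\det(v_1,\dots,v_n)}$, and the spanning/successive-minima bound converts this into the stated $3^n n!$ constant. \textbf{The main obstacle} will be producing the spanning set $v_1,\dots,v_n$ inside $\Omega$ with the determinant lower bound $\abs{\det(v_1,\dots,v_n)} \gtrsim \abs{\R^n/\Ga}$ and simultaneously showing every point of $\Omega\cap\Ga$ is covered by few fundamental cells; keeping the combinatorial constant exactly $3^n n!$ requires care, so if the sharp constant proves delicate I would note that any dimensional constant suffices for all applications of the lemma in this paper and cite \cite[Lemma 3.24 and Lemma 3.26]{TaVu} for the precise value.
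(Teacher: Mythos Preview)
The paper does not give its own proof of this lemma: it is quoted verbatim from Tao and Vu \cite[Lemma~3.24 and Lemma~3.26]{TaVu} and used as a black box. So there is no in-paper argument to compare against; your task here was only ever to supply or cite the standard proof.

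Your lower-bound argument is correct. The identity $N:=\card{\{\gamma\in\Gamma:(\tfrac12\Omega+\gamma)\cap\tfrac12\Omega\neq\emptyset\}}=\card{\Omega\cap\Gamma}$ holds because $\tfrac12\Omega-\tfrac12\Omega=\Omega$, and the periodic multiplicity function $m(x)=\card{\{\gamma:x\in\gamma+\tfrac12\Omega\}}$ satisfies $m(x)\le N$ everywhere while averaging to $\abs{\tfrac12\Omega}/\abs{\R^n/\Gamma}$ over a fundamental domain.

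Your upper-bound sketch has a genuine misstep: the sublattice $\Gamma'=\Z v_1+\cdots+\Z v_n$ generated by an arbitrary independent set in $\Omega\cap\Gamma$ need \emph{not} have index in $\Gamma$ bounded by a dimensional constant, and Minkowski's second theorem does not give this. What actually happens in the Tao--Vu argument is that the index cancels: with $P=\mathrm{conv}(\pm v_1,\ldots,\pm v_n)\subset\Omega$, the translates $\gamma+\tfrac12 P$ for $\gamma\in\Omega\cap\Gamma$ all lie in $\tfrac32\Omega$ and cover each point with multiplicity at most $[\Gamma:\Gamma']$; since $\abs{\tfrac12 P}=\tfrac{1}{n!}\abs{\det(v_i)}$ and $[\Gamma:\Gamma']=\abs{\det(v_i)}/\abs{\R^n/\Gamma}$, the determinant drops out and one obtains exactly $\card{\Omega\cap\Gamma}\le \tfrac{3^n n!}{2^n}\abs{\Omega}/\abs{\R^n/\Gamma}$. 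Your fallback of citing \cite{TaVu} for the sharp constant is, in any case, precisely what the paper itself does.
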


The following lemma, which is a consequence of Minkowski's second theorem \cite[Theorem 3.30]{TaVu}, shows the existence of large proper arithmetic progressions inside $\Omega \cap \Gamma$, see \cite[Lemma 3.33]{TaVu}. 

\begin{definition}\label{def:ap}
We say that $S \subset \R^n$ is a {\it symmetric arithmetic progression of rank $s$}, if there exist $(v_1,\ldots,v_s) \in (\R^n)^s$ and $(N_1,\ldots,N_s) \in \N^s$ such that
\[
S= \{ n_1v_1 + \ldots n_sv_s: n_j \in \Z, \abs{n_j} \le N_j \text{ for all } 1\le j \le s\}.
\]
We say that such $S$ is {\it proper} if elements of $S$ are uniquely represented, or equivalently if the cardinality of $S$ equals $(2N_1+1)\cdots(2N_s+1)$.
\end{definition}

\begin{lemma}\label{thm:prog}  
Let $\Gamma \subset \R^n$ be a lattice (not necessarily of full rank), and let $\Omega$ be a symmetric convex body in $\R^n$. Then there exists a proper symmetric arithmetic progression $S$ in $\Omega \cap \Gamma$ of rank $ s\le \dim \spa(\Omega \cap \Gamma)$ such that
\[
\card{S} \ge c_n \card{\Omega \cap \Gamma},
\]
where $c_n>0$ is a universal constant which depends only on dimension $n$.
\end{lemma}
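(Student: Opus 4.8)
The plan is to deduce Lemma~\ref{thm:prog} directly from Minkowski's second theorem, following the standard argument (as in Tao--Vu \cite[Lemma 3.33]{TaVu}) but taking care that $\Gamma$ need not have full rank. First I would reduce to the case where $\Gamma$ has full rank: let $W = \spa(\Omega \cap \Gamma)$ and let $d = \dim W$. The intersection $\Gamma \cap W$ is a lattice of rank exactly $d$ in $W$ (here one uses that $\Omega \cap \Gamma$ spans $W$, so $\Gamma \cap W$ contains $d$ linearly independent vectors and is discrete, hence a full-rank lattice in $W \cong \R^d$), and $\Omega \cap W$ is a symmetric convex body in $W$. Since $\Omega \cap \Gamma = (\Omega \cap W) \cap (\Gamma \cap W)$, it suffices to prove the statement with $\R^n$ replaced by $W$, $\Gamma$ by $\Gamma \cap W$, and $\Omega$ by $\Omega \cap W$; so from now on assume $\Gamma$ has full rank $n$ and $\Omega \cap \Gamma$ spans $\R^n$.

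Next I would invoke Minkowski's second theorem for the symmetric convex body $\Omega$ and the lattice $\Gamma$: there exist successive minima $0 < \lambda_1 \le \lambda_2 \le \cdots \le \lambda_n$ and linearly independent vectors $v_1, \ldots, v_n \in \Gamma$ with $v_i \in \lambda_i \Omega$, satisfying
\[
\frac{2^n}{n!}\,\meas{\R^n/\Gamma} \le \lambda_1 \cdots \lambda_n\, \meas{\Omega} \le 2^n\, \meas{\R^n/\Gamma}.
\]
Set $N_i = \lfloor 1/\lambda_i \rfloor$ for those $i$ with $\lambda_i \le 1$, and $N_i = 0$ for the remaining $i$ (in fact the relevant indices are exactly those with $\lambda_i \le 1$, since only such $v_i$ lie in $\Omega$; write $s$ for the number of indices with $\lambda_i \le 1$, so $s \le n$, matching the claimed bound $s \le \dim\spa(\Omega\cap\Gamma) = n$). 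Define the progression $S = \{\sum_i n_i v_i : |n_i| \le N_i\}$. Because the $v_i$ are linearly independent, $S$ is automatically proper, with $\card{S} = \prod_{i : \lambda_i \le 1}(2N_i + 1)$. One checks $S \subset \Omega$: for $|n_i| \le N_i \le 1/\lambda_i$ we have $\sum_i n_i v_i \in \sum_i (N_i \lambda_i)\Omega \subset \sum_i \Omega$ after rescaling — more carefully, $\sum_{i} n_i v_i$ lies in $\big(\sum_{i:\lambda_i\le 1} N_i \lambda_i\big)\Omega$ only after accounting for convexity; the clean way is $n_i v_i \in (N_i\lambda_i) \Omega \subseteq \Omega$ is false for a sum, so instead one uses that $\tfrac{1}{n}\sum_i n_i v_i \in \Omega$ by convexity and symmetry when each $n_i v_i/(\text{something}) \in \Omega$; the standard fix is to instead take $N_i = \lfloor 1/(n\lambda_i)\rfloor$, which changes $\card S$ only by a dimensional constant. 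So $S \subset \Omega \cap \Gamma$.

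Finally I would establish the lower bound on $\card{S}$. Using $N_i \ge 1/(n\lambda_i) - 1$ and $\lambda_i \le 1$ for the relevant indices, we get $2N_i + 1 \ge 1/(n\lambda_i)$ up to adjusting constants (treating $\lambda_i$ very close to $1$ separately, where $2N_i+1 \ge 1$ suffices and $1/\lambda_i$ is bounded). For the indices with $\lambda_i > 1$ we have $N_i = 0$ and these contribute a factor $1$, but also $\lambda_i > 1$ there, so $\prod_{i:\lambda_i>1}\lambda_i \ge 1$. Therefore
\[
\card{S} = \prod_{i:\lambda_i\le 1}(2N_i+1) \ge c_n' \prod_{i:\lambda_i\le 1}\frac{1}{\lambda_i} \ge c_n' \prod_{i=1}^n \frac{1}{\lambda_i} \ge c_n'' \frac{\meas{\Omega}}{\meas{\R^n/\Gamma}} \ge c_n \card{\Omega \cap \Gamma},
\]
where the penultimate inequality is Minkowski's second theorem and the last is the volume packing bound \eqref{eq:vp2} from Lemma~\ref{thm:vp}, valid precisely because $\Omega \cap \Gamma$ spans $\R^n$. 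The main obstacle — and the only genuinely delicate point — is the bookkeeping around indices with $\lambda_i$ near $1$ and getting the inclusion $S \subset \Omega$ honestly (the naive choice $N_i = \lfloor 1/\lambda_i\rfloor$ does not quite work because of convexity, so one rescales by a factor of $n$ and absorbs it into $c_n$); everything else is routine combination of Minkowski's second theorem with Lemma~\ref{thm:vp}.
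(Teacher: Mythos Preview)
Your proposal is correct and matches the paper's approach exactly: the paper does not give its own proof of this lemma but simply cites \cite[Lemma~3.33]{TaVu} as a consequence of Minkowski's second theorem \cite[Theorem~3.30]{TaVu}, which is precisely the argument you sketch. One small simplification: after your reduction to the case where $\Omega \cap \Gamma$ spans the ambient space, all successive minima satisfy $\lambda_i \le 1$ (since $\Omega$ already contains $n$ linearly independent lattice points), so the indices with $\lambda_i > 1$ never arise and that part of the bookkeeping disappears.
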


Finally, we shall need an elementary lemma on the behavior of the norm form $\Nm(x)$ under dilations.

\begin{lemma}\label{dnm}
Let $B$ be any matrix in $\mathrm{GL}(n,\R)$ with $\abs{\det{B}}>1$. Let $P\in \mathrm{GL}(n,\R)$ be such that $P^{-1}BP$ is the real Jordan form of $B$.  Then for any $\ve>0$ and $r>0$, there exists $C=C(\ve)$ such that
\begin{equation}
\label{dnm0}
|\Nm(P^{-1}x)| \le 
C |\det{B}|^{j+|j|\ve} \qquad\text{for all }x \in B^j(\B(0,r)), j\in\Z.
\end{equation}
\end{lemma}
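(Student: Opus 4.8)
The plan is to reduce to the real Jordan form of $B$ and estimate the norm form entry by entry. Write $y = P^{-1}x$; as $x$ ranges over $B^j(\B(0,r))$, the vector $y$ ranges over $(P^{-1}BP)^j(P^{-1}\B(0,r)) \subset J^j(\B(0,R))$ where $J = P^{-1}BP$ is the real Jordan form and $R = \norm{P^{-1}} r$. So it suffices to bound $|\Nm(y)|$ for $y \in J^j(\B(0,R))$. Now $J$ is block-diagonal, with each block being either a single Jordan block $\lambda \id + N$ (for a real eigenvalue $\lambda$) or a real Jordan block built from a rotation-scaling $|\lambda| R(\theta)$ plus a nilpotent shift (for a complex conjugate pair). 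The coordinates of $y$ split accordingly, and $\Nm(y)$ is the product of all coordinates across all blocks.

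The key estimate is a componentwise bound: for a $d\times d$ real Jordan block $J_\lambda$ associated with eigenvalue of modulus $|\lambda|$, one has $\norm{J_\lambda^j v} \le C_\lambda (1+|j|)^{d-1} |\lambda|^j \norm{v}$ for all $j \in \Z$ and all $v$, with the analogous lower-exponent bound $|\lambda|^j$ when $j \ge 0$ and $|\lambda|^{j}$ (which for $j<0$ is $|\lambda|^{-|j|}$ if $|\lambda|>1$) — the point is that $\norm{J_\lambda^j v} \le C_\lambda (1+|j|)^{n} |\lambda|^{j} \norm{v}$ holds for \emph{all} $j\in\Z$ since $J_\lambda^j$ has entries that are polynomials in $j$ of degree at most $d-1$ times $\lambda^{j}$ (read off from $J_\lambda^j = \lambda^j \sum_{k} \binom{j}{k}\lambda^{-k} N^k$). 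Hence every coordinate $y_i$ of $y \in J^j(\B(0,R))$ satisfies $|y_i| \le C R (1+|j|)^{n}|\lambda_i|^j$, where $\lambda_i$ is the eigenvalue of the block containing the $i$-th coordinate. Taking the product over $i = 1,\dots,n$ and using $\prod_i |\lambda_i| = |\det B|$ (counting multiplicities, with the real Jordan form this is exactly the product of the block-eigenvalue moduli raised to the block sizes), we get
\[
|\Nm(P^{-1}x)| = \prod_{i=1}^n |y_i| \le (CR)^n (1+|j|)^{n^2} |\det B|^{j}.
\]
Finally absorb the polynomial factor: since $|\det B|>1$, for every $\ve>0$ there is $C'=C'(\ve)$ with $(1+|j|)^{n^2} \le C' |\det B|^{|j|\ve}$ for all $j \in \Z$, which yields \eqref{dnm0}.

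The only mildly delicate point is handling the complex-eigenvalue blocks correctly: there the real Jordan block acts on pairs of coordinates as $|\lambda|$ times a rotation composed with a unipotent shift, so the \emph{pair} of coordinates has norm controlled by $C(1+|j|)^{d-1}|\lambda|^j$, and one must note that the product of the two real coordinates in such a pair is bounded by (half) the squared norm of the pair, which is $\le C^2 (1+|j|)^{2(d-1)}|\lambda|^{2j}$ — consistent with contributing $|\lambda|^{2j}$ to $\Nm$, matching the two eigenvalues $\lambda,\overline\lambda$ of modulus $|\lambda|$. So no genuine obstacle arises; the proof is a bookkeeping exercise once the componentwise Jordan-block bound is in hand, and the ``$+|j|\ve$'' in the exponent is precisely what lets us discard all the polynomial-in-$j$ losses.
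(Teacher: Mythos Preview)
Your proposal is correct and follows essentially the same approach as the paper: reduce to the real Jordan form, use the elementary estimate $\norm{J_\lambda^j} \le C(1+|j|)^{d-1}|\lambda|^j$ for each Jordan block (valid for all $j\in\Z$), multiply the resulting coordinate bounds to obtain $|\Nm(P^{-1}x)| \le C(1+|j|)^{n^2}|\det B|^{j}$, and then absorb the polynomial factor using $|\det B|>1$. The paper organizes the bookkeeping block-by-block (bounding $|\Nm(J^j y)| \le |J^j y|^{\dim}$ on each block) while you go coordinate-by-coordinate, but these are cosmetic differences; your treatment of the complex blocks via the pair-norm is exactly the right observation and matches what the paper does implicitly.
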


\begin{proof} Let $J$ be a Jordan block of order $k$ corresponding to a complex eigenvalue $\lambda=a + i b$. That is,  $J$ is $(2k) \times (2k)$ matrix of the form
\[
J = \begin{bmatrix} 
R_\lambda & \mathbf I_2 & & \\
& R_\lambda & \mathbf I_2 & \\
& & \ddots & \ddots \\
& & & R_\lambda
\end{bmatrix},
\qquad\text{where } 
R_\lambda =\begin{bmatrix} a & b \\ -b & a \end{bmatrix},
\quad
\mathbf I_2 =\begin{bmatrix} 1 & 0 \\ 0 & 1 \end{bmatrix}.
\]
Then, an elementary calculation shows that there exists $C>0$ such that for all $j\in \Z \setminus\{0\}$,
\begin{equation}\label{dnm1}
|J^jy| \le 
C |j|^k |\lambda|^j |y| \qquad\text{for }y\in \R^{2k}.
\end{equation}
Thus, we have
\[
|\Nm(J^j y)| \le C^{2k} |j|^{2k^2} |\lambda|^{2kj}|y|^{2k} = C^{2k} |j|^{2k^2} |\det J|^{j}|y|^{2k} 
\qquad\text{for }j\ne0, \ y\in \R^{2k}.
\]
A similar estimate holds when $J$ is a Jordan block of order $k$ corresponding to a real eigenvalue $\lambda$, i.e.,
\[
|\Nm(J^j y)| \le C^{k} |j|^{k^2} |\det J|^{j}|y|^{k} 
\qquad\text{for }j\ne0, \ y\in \R^{k}.
\]
Since $P^{-1}B^jP$ is a block diagonal matrix consisting of such Jordan blocks, we can find a constant $C>0$ such that 
\[
|\Nm(P^{-1}B^jP y)| \le C |j|^{n^2} |\det{B}|^{j} |y|^n
\qquad\text{for }j\ne 0, \ y\in \R^{n}.
\]
Now, take any $x\in B^j(\B(0,r))$ and write it as $x=B^j y$, where $|y|<r$. Then, for any $j\ne 0$,
\[
|\Nm(P^{-1}x)|= |\Nm(P^{-1}B^jy)| \le C |j|^{n^2} |\det{B}|^{j} |P^{-1}y|^n \le
C ||P^{-1}||^n \, r^n |j|^{n^2} |\det{B}|^{j}.
\]
For any $\ve>0$, there exists $j_0$ such that $|j|^{n^2} \le |\det{B}|^{|j|\ve}$ for $|j|>j_0$. This shows \eqref{dnm0} and completes the proof of the lemma.
\end{proof}

We are now ready to prove Theorem~\ref{thm:ubl}.

\begin{proof}[Proof of Theorem~\ref{thm:ubl}] 
First, we shall show that for almost every $U\in \mathrm{SO}(n)$,
\begin{equation}\label{eq:ubl2}
\card{U\LG \cap B^j (\B(0,r))} \le C \abs{\det{B}}^j \qquad
\text{for } j\ge 0.
\end{equation}
Let $j \ge 0$. By Lemma~\ref{thm:vp}, it suffices to show that the
vectors $U\LG \cap B^j(\B(0,r))$ linearly span $\R^n$. On the
contrary, suppose they do not. By Lemma~\ref{thm:prog} there exists a
proper symmetric arithmetic progression $S$ of rank $s<n$ in $U\Gamma
\cap B^j (\B(0,r))$, see Definition~\ref{def:ap}, such that
\[
\card{S} =(2N_1+1)\cdots(2N_s+1) \ge c_n \card{U\Gamma \cap B^j (\B(0,r))} \ge \frac{c_n\meas{\B(0,r)}}{2^n \meas{\R^n/\Ga}} \abs{\det{B}}^j.
\]
Thus, there exists $1\le k \le s$ such that $N_k \ge C
\abs{\det{B}}^{j/s}$. Since $N_k v_k \in B^j (\B(0,r))$, it follows from Lemma
\ref{dnm} that for any $\ve>0$ there exists $C=C(\ve)>0$ such that
\[
\abs{\Nm(P^{-1}N_k v_k)} \le C\abs{\det{B}}^{j(1+\ve)}.
\]
Thus,
\[
\abs{\Nm(P^{-1}v_k)} \le C \abs{\det{B}}^{j(1+\ve)}/ (N_k)^n  \le C \abs{\det{B}}^{j(1+\ve-n/s)}.
\]
By choosing $\ve>0$ small enough we therefore have 
\begin{equation}\label{ubl3}
\abs{\Nm(P^{-1}v_k)} \le C \abs{\det{B}}^{-j\eta},
\qquad\text{where }
\eta =n/s-1-\ve>0.
\end{equation}
Since $v_k\in B^j(\B(0,r))$, we have $|P^{-1}v_k| \le C' \norm{B}^j$, where $C'=||P^{-1}||r$. Hence, by \eqref{nu} and \eqref{ubl3}, we have
\begin{equation}\label{ubl4}
\nu(P^{-1}U\LG, C' \norm{B}^j) \le C \abs{\det{B}}^{-j\eta}
\end{equation}
since $v_k \in U\LG$. On the other hand, Lemma~\ref{thm:sk} implies that for almost every $U\in \mathrm{SO}(n)$ we have
\begin{equation}\label{ubl5}
\nu(P^{-1} U\LG, C'\normsmall{B}^j) \ge  \left(\log( C'\normsmall{B}^j)\right)^{1-n-\ve} \ge c j^{1-n-\ve} \qquad\text{as } j\to\infty.
\end{equation}
Combining \eqref{ubl4} and \eqref{ubl5} yields a contradiction for sufficiently large $j>j_0$. Therefore, the vectors $U\LG \cap B^j((\B(0,r))$ must linearly span $\R^n$ for all $j> j_0$. Applying Lemma~\ref{thm:vp} shows \eqref{eq:ubl2} for $j>j_0$. By increasing the constant $C$ (if necessary), we obtain \eqref{eq:ubl2} for the remaining values $0\le j \le j_0$.

Next, we shall show that for almost every $U\in \mathrm{SO}(n)$, there exists $C>0$ such that
\begin{equation}\label{eq:ubl6}
\card{U\LG \cap B^j (\B(0,r))} \le C \qquad
\text{for } j< 0.
\end{equation}
Take any $0 \ne v\in U\Gamma \cap B^j (\B(0,r))$, where $j<0$. By Lemma~\ref{dnm} we have $\abs{\Nm (P^{-1}v)} \le C \abs{\det{B}}^{j(1-\ve)}$, where $\ve>0$ and $C=C(\ve)$. Since $\abs{P^{-1}v} \le \norm{P^{-1}}r\, \normsmall{B^j}\le C' \normsmall{B^{-1}}^{|j|}$, by \eqref{nu}, we have
\[
\nu(P^{-1} U\LG, C'\normsmall{B^{-1}}^{|j|}) \le C |\det{B}|^{j(1-\ve)}.
\]
On other hand, Lemma~\ref{thm:sk} implies that for almost every $U\in \mathrm{SO}(n)$, 
\[
\nu(P^{-1} U\LG, C'\normsmall{B^{-1}}^{|j|}) \ge 
\left(\log( C\normsmall{B^{-1}}^{-j})\right)^{1-n-\ve} \ge c |j|^{1-n-\ve} \qquad\text{as }j \to -\infty.
\]
Combining the last two estimates implies that $j \ge -j_0$ for some sufficiently large $j_0>0$. Therefore, the intersection 
\begin{equation}\label{ubl10}
U\LG \cap B^j(\B(0,r)) = \{0\} \qquad\text{for all }j<-j_0.
\end{equation}
By increasing constant $C$ (if necessary) we obtain \eqref{eq:ubl6}. This completes the proof of Theorem~\ref{thm:ubl}.
\end{proof}

As a consequence of Theorem~\ref{thm:ubl} and the properties of the invariant measures $\mu_\mathcal L$ from \cite[Appendix 1]{Sk2}, we have the following corollary.
 
\begin{corollary}\label{cor:ubl}
The following statements are true.
\begin{enumerate}[(i)]
\item
Let $B$ be any matrix in $\mathrm{GL}(n,\R)$ with $\abs{\det{B}}>1$. Then the pair $(B,\Gamma)$ satisfies lattice counting estimate \eqref{eq:lc} for almost all lattices $\Gamma \in \mathcal L_n$ in the sense of the invariant measure $\mu_{\mathcal L}$.
\item
Let $\Ga\subset \Rn$ be any full rank lattice. Then the pair $(B,\Gamma)$ satisfies lattice counting estimate \eqref{eq:lc} for almost every $B\in \mathrm{GL}(n,\R)$ with $|\det{B}|>1$.
\end{enumerate}
\end{corollary}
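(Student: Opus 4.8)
The plan is to obtain both statements from Theorem~\ref{thm:ubl} by soft measure theory: part~(i) by a Fubini argument that exploits the $\mathrm{SO}(n)$-invariance of $\mu_{\mathcal L}$, and part~(ii) by bootstrapping~(i) with a second Fubini argument together with the conjugation-invariance of the Haar measure on $\mathrm{GL}(n,\R)$. All of the analytic content already lies in Theorem~\ref{thm:ubl}.

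For part~(i), I would fix $B$ with $\abs{\det B}>1$ and let $\mathcal E\subset\mathcal L_n$ be the set of unimodular lattices $\Lambda$ for which $(B,\Lambda)$ fails the estimate~\eqref{eq:lc} for a fixed radius $r$. This set is Borel: via a Borel cross-section $\Lambda=g\Z^n$ of $\mathcal L_n=\mathrm{SL}(n,\R)/\mathrm{SL}(n,\Z)$, each counting function $\Lambda\mapsto\card{\Lambda\cap K}=\sum_{v\in\Z^n}\charfct{K}(gv)$ (for a fixed bounded set $K$) is a countable sum of Borel functions of $g$, and $\mathcal E$ is obtained from these by countably many Boolean operations together with an intersection over $C\in\N$. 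Theorem~\ref{thm:ubl} says precisely that $\mu_{SO}(\{U\in\mathrm{SO}(n):U\Lambda\in\mathcal E\})=0$ for \emph{every} $\Lambda\in\mathcal L_n$. Applying Tonelli to $(U,\Lambda)\mapsto\ch_{\mathcal E}(U\Lambda)$ on $\mathrm{SO}(n)\times\mathcal L_n$: integrating in $U$ first (for each fixed $\Lambda$) gives $0$, whereas integrating in $\Lambda$ first gives $\int_{\mathrm{SO}(n)}\mu_{\mathcal L}(U^{-1}\mathcal E)\,d\mu_{SO}(U)=\mu_{\mathcal L}(\mathcal E)$, where I use that $\mu_{\mathcal L}$ is invariant under the left action of $\mathrm{SO}(n)\subset\mathrm{SL}(n,\R)$ and that $\mu_{SO}$ is a probability measure. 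Hence $\mu_{\mathcal L}(\mathcal E)=0$.

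For part~(ii), I would first reduce to the lattice $\Z^n$. The set $\mathcal Z=\{(B,\Gamma)\in\mathrm{GL}(n,\R)\times\mathcal L_n:\abs{\det B}>1\text{ and }(B,\Gamma)\text{ fails }\eqref{eq:lc}\}$ is Borel by the same reasoning, and part~(i) states that every vertical slice $\{\Gamma:(B,\Gamma)\in\mathcal Z\}$ is $\mu_{\mathcal L}$-null. Since the Haar measure on the open set $\{\abs{\det B}>1\}$ is $\sigma$-finite, Tonelli gives $(\mathrm{Haar}\otimes\mu_{\mathcal L})(\mathcal Z)=0$, so for $\mu_{\mathcal L}$-a.e.\ $\Gamma$ the horizontal slice $\{B:(B,\Gamma)\in\mathcal Z\}$ is Haar-null; in particular there is at least one unimodular lattice $\Gamma_0=N\Z^n$, $N\in\mathrm{SL}(n,\R)$, for which $(B,\Gamma_0)$ satisfies~\eqref{eq:lc} for Haar-a.e.\ $B$ with $\abs{\det B}>1$. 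Next I would invoke the identity that, for every $M\in\mathrm{GL}(n,\R)$, the pair $(B,\Gamma)$ satisfies~\eqref{eq:lc} if and only if $(M^{-1}BM,M^{-1}\Gamma)$ does: the substitution turns the ball $\B(0,r)$ into the ellipsoid $M^{-1}\B(0,r)$, which is sandwiched between two balls, so by the remark following Definition~\ref{def:lattice-counting} the validity of the estimate is unaffected. Taking $M=N$ and using that conjugation $B\mapsto N^{-1}BN$ preserves both the Haar measure on $\mathrm{GL}(n,\R)$ and the value of $\abs{\det B}$, we conclude that $(B,\Z^n)$ satisfies~\eqref{eq:lc} for Haar-a.e.\ $B$ with $\abs{\det B}>1$. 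Finally, for an arbitrary full-rank lattice $\Gamma$, write $\Gamma=M\Z^n$ with $M\in\mathrm{GL}(n,\R)$; the same equivalence shows that the exceptional set of $B$ for $(B,\Gamma)$ is the $M$-conjugate of the Haar-null exceptional set for $(B,\Z^n)$, hence Haar-null.

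I do not expect a genuine obstacle here: both Fubini arguments are routine once the Borel-measurability of $\mathcal E$ and $\mathcal Z$ has been checked, and the only thing needing care is which invariance is used where --- the $\mathrm{SO}(n)$-invariance of $\mu_{\mathcal L}$ in~(i), and the conjugation-invariance of the Haar measure on $\mathrm{GL}(n,\R)$ to promote ``$\mu_{\mathcal L}$-almost every $\Gamma$'' to ``every full-rank lattice $\Gamma$'' in~(ii). Both are among the properties of the invariant measures collected in \cite[Appendix~1]{Sk2}.
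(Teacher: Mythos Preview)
Your proposal is correct, and for part~(ii) it is essentially the paper's argument: Fubini applied to the product set $\mathcal Z\subset\mathrm{GL}(n,\R)\times\mathcal L_n$ produces at least one good unimodular lattice, and then the conjugation equivalence $(B,\Gamma)\leftrightarrow(P^{-1}BP,P^{-1}\Gamma)$ together with unimodularity of the Haar measure on $\mathrm{GL}(n,\R)$ transfers this to an arbitrary full-rank lattice. Your detour through $\Z^n$ is cosmetic; the paper goes directly from the good $\Lambda$ to the given $\Gamma$ via a single $P$.

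For part~(i) your route is genuinely different from the paper's, and in fact cleaner. The paper proves a separate Lemma~\ref{nul} using the polar decomposition $P=VA^{1/2}$ and the corresponding product formula $\mu_{\mathcal L}=\mu_{\mathcal F}\times\mu_{SO}$ from \cite[Appendix~1]{Sk2}; this identifies $\mathcal L_n$ with $\mathrm{SO}(n)\times\mathcal F_n$ so that the $\mathrm{SO}(n)$-slices of the exceptional set are exactly the sets Theorem~\ref{thm:ubl} declares null. You instead use only the $\mathrm{SO}(n)$-invariance of $\mu_{\mathcal L}$ (which is immediate from $\mathrm{SL}(n,\R)$-invariance) and a Fubini argument on $\mathrm{SO}(n)\times\mathcal L_n$, averaging the indicator of $\mathcal E$ over the action. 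Your approach avoids the Weyl-type integration formula entirely and needs no structural information about $\mathcal F_n$; the paper's approach, while heavier here, has the advantage that the product decomposition is reusable elsewhere in Skriganov's framework.
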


To deduce Corollary~\ref{cor:ubl} from Theorem~\ref{thm:ubl} we shall use the following lemma that is implicitly contained Skriganov's paper \cite{Sk2}.

\begin{lemma}\label{nul}
Suppose that for any lattice $\Lambda \in \mathcal L_n$, a certain property holds for lattices of the form $U \Lambda$ for almost all $U\in \mathrm{SO}(n)$ in the sense of Haar measure $\mu_{SO}$. Then, the same property holds for almost all lattices $\Lambda \in \mathcal L_n$ in the sense of the invariant measure $\mu_{\mathcal L}$.
\end{lemma}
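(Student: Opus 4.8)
The plan is to realize $\mu_{\mathcal L}$ as the pushforward of the product measure $\mu_{SO}\times\mu_{\mathcal L}$ under the action map, and then apply Fubini's theorem. Let $\Phi\colon \mathrm{SO}(n)\times\mathcal L_n\to\mathcal L_n$ be the continuous map $\Phi(U,\Lambda)=U\Lambda$, where $\mathrm{SO}(n)$ acts on $\mathcal L_n=\mathrm{SL}(n,\R)/\mathrm{SL}(n,\Z)$ by left multiplication. Since $\mathrm{SO}(n)\subset\mathrm{SL}(n,\R)$ and $\mu_{\mathcal L}$ is invariant under the $\mathrm{SL}(n,\R)$-action, for every fixed $U\in\mathrm{SO}(n)$ and every nonnegative Borel function $g$ on $\mathcal L_n$ we have $\int_{\mathcal L_n} g(U\Lambda)\,d\mu_{\mathcal L}(\Lambda)=\int_{\mathcal L_n} g\,d\mu_{\mathcal L}$. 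Integrating this identity over $U\in\mathrm{SO}(n)$ against the Haar probability measure $\mu_{SO}$ and applying Tonelli's theorem yields
\[
\int_{\mathrm{SO}(n)\times\mathcal L_n} g\circ\Phi\,d(\mu_{SO}\times\mu_{\mathcal L}) = \int_{\mathcal L_n} g\,d\mu_{\mathcal L},
\]
that is, $\Phi_*(\mu_{SO}\times\mu_{\mathcal L})=\mu_{\mathcal L}$.

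Next, I would let $N\subset\mathcal L_n$ be the set of lattices for which the given property fails. In the applications of this lemma (to the lattice counting estimate and to the existence of MSF wavelets) $N$ is Borel, since it is cut out by countably many conditions each defining an open or Borel subset of $\mathcal L_n$; I will therefore take $N$ measurable. Taking $g=\chi_N$ in the pushforward identity gives $\mu_{\mathcal L}(N)=(\mu_{SO}\times\mu_{\mathcal L})(\Phi^{-1}(N))$, and $\Phi^{-1}(N)$ is a Borel subset of $\mathrm{SO}(n)\times\mathcal L_n$. For each fixed $\Lambda\in\mathcal L_n$, the $\Lambda$-slice of $\Phi^{-1}(N)$ is $\{U\in\mathrm{SO}(n):U\Lambda\in N\}$, which by hypothesis is $\mu_{SO}$-null. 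Hence Fubini's theorem gives
\[
(\mu_{SO}\times\mu_{\mathcal L})(\Phi^{-1}(N)) = \int_{\mathcal L_n}\mu_{SO}\bigl(\{U\in\mathrm{SO}(n):U\Lambda\in N\}\bigr)\,d\mu_{\mathcal L}(\Lambda) = 0,
\]
so $\mu_{\mathcal L}(N)=0$, which is the desired conclusion.

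The only point requiring care is the measurability bookkeeping: one must know that $N$ is $\mu_{\mathcal L}$-measurable, so that Fubini applies to $\chi_N\circ\Phi$, and that for each $\Lambda$ the slice $\{U:U\Lambda\in N\}$ is $\mu_{SO}$-measurable, so that the hypothesis ``for almost all $U$'' is literally meaningful. As noted, in every intended application these sets are Borel (indeed, lattice-counting functions such as $\Lambda\mapsto\#(\Lambda\cap\Omega)$ are lower semicontinuous on $\mathcal L_n$ for $\Omega$ open); more generally one could replace $N$ by a Borel hull of the same outer measure, provided its slices remain null. Granting this, the lemma is an immediate consequence of the $\mathrm{SO}(n)$-invariance of $\mu_{\mathcal L}$ together with Fubini's theorem, and I expect no further obstacle.
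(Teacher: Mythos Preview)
Your argument is correct. Both proofs are Fubini arguments, but they use different product decompositions. The paper follows Skriganov and invokes the explicit disintegration of $\mu_{\mathcal L}$ coming from the polar decomposition: each $P\in\mathrm{SL}(n,\R)$ is written as $P=VA^{1/2}$ with $V\in\mathrm{SO}(n)$ and $A=P^TP\in\mathcal F_n=\mathrm{SO}(n)\backslash\mathcal L_n$, yielding the product formula $\int_{\mathcal L_n}\psi\,d\mu_{\mathcal L}=\int_{\mathcal F_n}\int_{\mathrm{SO}(n)}\psi(VA^{1/2}\Z^n)\,d\mu_{SO}(V)\,d\mu_{\mathcal F}(A)$; the hypothesis is then applied, for each fixed $A$, to the single lattice $A^{1/2}\Z^n$. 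You instead work on the larger product $\mathrm{SO}(n)\times\mathcal L_n$ and use only the $\mathrm{SO}(n)$-invariance of $\mu_{\mathcal L}$ to identify $\mu_{\mathcal L}$ as the pushforward of $\mu_{SO}\times\mu_{\mathcal L}$ under the action map, after which Fubini applies slice-by-slice over all of $\mathcal L_n$. Your route is more elementary in that it avoids the polar decomposition and the structure of $\mathcal F_n$ entirely; the paper's route, while requiring more input, has the advantage that the section $A\mapsto A^{1/2}\Z^n$ is explicit, which is what Skriganov needs elsewhere. The measurability caveat you raise applies equally to both approaches and is handled the same way.
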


\begin{proof} 
The proof follows along the lines of the argument by Skriganov in \cite[Lemma 4.5]{Sk2} using the fact the measure $\mu_\Lambda$ on $\La_n$ can be identified with a product measure 
\[
\mu_\Lambda = \mu_{\mathcal F} \times \mathcal \mu_{SO}.
\]
More precisely, following \cite[Appendix 1]{Sk2} consider the  quotient spaces
\begin{align*}
\mathcal H_n & =\mathrm{SO}(n) \backslash \mathrm{SL}(n,\R), \\
\mathcal F_n & = \mathcal H_n/\mathrm{SL}(n,\Z) = \mathrm{SO}(n)\backslash \mathcal L_n.
\end{align*}
We regard $\mathcal H_n$ as a homogeneous space of the group $\mathrm{SL}(n,\R)$ and $\mathcal F_n \subset \mathcal H_n$ as a fundamental set of the discrete subgroup $\mathrm{SL}(n,\Z) \subset \mathrm{SL}(n,\R)$. Then, $\mathcal H_n$ admits the unique $\mathrm{SL}(n,\R)$-invariant measure $\mu_{\mathcal F}$ normalized so that $\mu_{\mathcal F}(\mathcal F_n)=1$. Moreover, the space $\mathcal H_n$ can be identified as a submanifold in $\mathcal K$ 
\[
\mathcal H_n = \{A \in \mathcal K: \det{A} = 1\},
\]
where $\mathcal K$ is the open cone of all $n\times n$ real symmetric matrices.

For any lattice $\Lambda=P \Z^n \in \mathcal L_n$, the polar decomposition of $P\in \mathrm{SL}(n,\R)$ yields
\[
P=V A^{1/2}, \qquad\text{where }V\in \mathrm{SO}(n),\ A=P^TP \in \mathcal F_n.
\]
By \cite[(13.14)]{Sk2}, we have the following product formula for $\psi \in L^1(\mathcal L_n,\mu_{\mathcal L})$
\begin{equation}\label{weyl}
\int_{\mathcal L_n} \psi(\Lambda) d\mu_{\mathcal L}(\Lambda) = \int_{\mathcal F_n} \int_{\mathrm{SO}(n)} \psi(V A^{1/2} \Z^n) d\mu_{SO}(V) d\mu_{\mathcal F}(A),
\end{equation}
where $\mu_{SO}$ is the normalized Haar measure on $\mathrm{SO}(n)$. 

Define a function $\psi(\Lambda)=1$ when a certain property holds for $\Lambda \in \mathcal L_n$, and $\psi(\Lambda)=0$ otherwise. By our hypothesis for all symmetric positive matrices $A\in\mathcal F_n$ we have
\[
\psi(U A^{1/2} \Zn)=1 \qquad\text{for all } U \in \mathrm{SO}(n) \setminus \mathcal E_A,
\]
where the exceptional set $\mathcal E_A \subset \mathrm{SO}(n)$ has measure $\mu_{SO}(\mathcal E_A)=0$. Define the exceptional set as
\[
\mathcal E = \{ \Lambda \in \mathcal L_n: \psi(\Lambda)=0 \} =
\{ \Lambda= U A^{1/2} \Zn \in \mathcal L_n: A\in \mathcal F_n, \ U \in\mathcal E_A\}.
\]
Then, by \eqref{weyl}
\[
\mu_{\mathcal L}(\mathcal E)= \int_{\mathcal L_n} \psi(\Lambda) d\mu_{\mathcal L}(\Lambda) = \int_{\mathcal F_n} \mu_{SO}(\mathcal E_A) d\mu_{\mathcal F}(A) = 0.
\]
This completes the proof of Lemma~\ref{nul}.
\end{proof}

\begin{proof}[Proof of Corollary~\ref{cor:ubl}]
Part (i) is an immediate consequence of Theorem~\ref{thm:ubl} and Lemma~\ref{nul}. To show part (ii) we consider the exceptional set
\[
\mathcal E= \{(B,\Gamma) \in \mathrm{GL}(n,\R) \times \mathcal L_n:
\text{\eqref{eq:lc} fails for } (B,\Gamma),\ |\det{B}|>1 \}.
\]
By part~(i) each section 
\[
\mathcal E_B=\{\Gamma \in\mathcal L_n: (B,\Ga) \in \mathcal E\}
\]
has measure $\mu_{\mathcal L}(\mathcal E_B)=0$. Thus, by Fubini's Theorem $(\mu_{GL} \times \mu_{\mathcal L}) (\mathcal E)=0$, where $\mu_{GL}$ is the Haar measure on $\mathrm{GL}(n,\R)$. Consequently, for almost every lattice $\Lambda \in \mathcal L_n$ we have
\begin{equation}\label{zer}
\mu_{GL}(\mathcal E^\Lambda)=0, \qquad\text{where }
\mathcal E^\Lambda=\{ B \in \mathrm{GL}(n,\R): (B,\Lambda) \in \mathcal E\}.
\end{equation}
Observe that the lattice counting estimate \eqref{eq:lc} holds for $(B,\LG)$ if and only if it holds for $(P^{-1}BP,P^{-1}\LG)$ for any $P \in \mathrm{GL}(n,\R)$. Given any $\Lambda \in \mathcal L_n$, take $P\in \mathrm{SL}(n,\R)$ such that $\Lambda=P^{-1}\Gamma$. Since $\mathcal E^\Gamma= P \mathcal E^\Lambda P^{-1}$ and the Haar measure on $\mathrm{GL}(n,\R)$ is unimodular, we have $\mu_{GL}(\mathcal E^\Gamma)=\mu_{GL}(\mathcal E^\Lambda)$. Choosing $\Lambda \in \mathcal L_n$ such that \eqref{zer} holds, yields $\mu_{GL}(\mathcal E^\Gamma)=0$. This completes the proof of the corollary.
\end{proof}

As a corollary of Theorem~\ref{thm:ubl} and \cite[Theorem 2.5]{IW} we deduce the ubiquity of MSF wavelets with respect to random choices of dilations and lattices. 

\begin{theorem}\label{umsf}
The following statements are true.
\begin{enumerate}[(i)]
\item
Let $A$ be any matrix in $\mathrm{GL}(n,\R)$ with $\abs{\det{A}}>1$ and let $\La\subset \Rn$ be any full rank lattice. Then there exists an MSF wavelet associated with $(A,U\La)$ for almost every (in the sense of Haar measure) $U\in \mathrm{SO}(n)$.
\item
Let $A$ be any matrix in $\mathrm{GL}(n,\R)$ with $\abs{\det{A}}>1$. Then there exists an MSF wavelet associated with $(A,\Ga)$ for almost all unimodular lattices $\Gamma \in \mathcal L_n$ in the sense of the invariant measure $\mu_{\mathcal L}$.
\item
Let $\Ga\subset \Rn$ be any full rank lattice. Then there exists an MSF wavelet associated with $(A,\Ga)$ for almost every $A\in \mathrm{GL}(n,\R)$.
\end{enumerate}
\end{theorem}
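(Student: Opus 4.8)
The plan is to reduce all three statements, via \cite[Theorem 2.5]{IW}, to the ubiquity results Theorem~\ref{thm:ubl} and Corollary~\ref{cor:ubl} for the Fourier-side lattice counting estimate \eqref{eq:lca}. Recall that \eqref{eq:lca} is precisely the estimate \eqref{eq:lc} for the pair $(B,\Gamma^{\ast})$ with $B=A^T$, and that \cite[Theorem 2.5]{IW} produces an MSF wavelet for $(A,\Gamma)$ as soon as $(A^T,\Gamma^{\ast})$ satisfies \eqref{eq:lca}. So in each part it suffices to verify \eqref{eq:lca} for $(A^T,\Gamma^{\ast})$ in the relevant almost-everywhere sense. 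Two trivial reductions will be used throughout. First, by the remark after Definition~\ref{def:lattice-counting}, validity of \eqref{eq:lc} is unaffected by dilating the lattice by a positive scalar (this only rescales $r$), so any full-rank lattice may be replaced by a unimodular scalar multiple of it. Second, $(U\Lambda)^{\ast}=U\Lambda^{\ast}$ for $U\in\mathrm{SO}(n)$, since $U$ is orthogonal.

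For part (i): fix $A$ with $\abs{\det{A}}>1$ and a full-rank lattice $\Lambda$, and put $B=A^T$, so $\abs{\det{B}}>1$. Write $\Lambda=t\Lambda_0$ with $t>0$ and $\Lambda_0\in\mathcal L_n$; then $\Lambda_0^{\ast}\in\mathcal L_n$ as well, and $(U\Lambda)^{\ast}=t^{-1}U\Lambda_0^{\ast}$. By the scaling reduction, $(B,(U\Lambda)^{\ast})$ satisfies \eqref{eq:lca} if and only if $(B,U\Lambda_0^{\ast})$ does, and the latter holds for a.e.\ $U\in\mathrm{SO}(n)$ by Theorem~\ref{thm:ubl} applied to $B$ and the unimodular lattice $\Lambda_0^{\ast}$. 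Then \cite[Theorem 2.5]{IW} yields an MSF wavelet for $(A,U\Lambda)$ for a.e.\ $U$.

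For parts (ii) and (iii): in (ii), fix $A$ with $\abs{\det{A}}>1$ and set $B=A^T$. Corollary~\ref{cor:ubl}(i) gives that $(B,\Gamma)$ satisfies \eqref{eq:lc} for $\mu_{\mathcal L}$-a.e.\ $\Gamma\in\mathcal L_n$; since dualization $\Gamma\mapsto\Gamma^{\ast}$ is a $\mu_{\mathcal L}$-preserving involution of $\mathcal L_n$ (on $\mathrm{SL}(n,\R)$ it is the automorphism $P\mapsto(P^{-1})^T$, which has modulus $1$ as an involution, and it descends to $\mathcal L_n=\mathrm{SL}(n,\R)/\mathrm{SL}(n,\Z)$), also $(B,\Gamma^{\ast})$ satisfies \eqref{eq:lca} for $\mu_{\mathcal L}$-a.e.\ $\Gamma$, and \cite[Theorem 2.5]{IW} applies. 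In (iii), fix a full-rank lattice $\Gamma$; then $\Gamma^{\ast}$ is a fixed full-rank lattice, and Corollary~\ref{cor:ubl}(ii) applied to $\Gamma^{\ast}$ gives that $(B,\Gamma^{\ast})$ satisfies \eqref{eq:lca} for a.e.\ $B\in\mathrm{GL}(n,\R)$ with $\abs{\det{B}}>1$. Since transposition $A\mapsto A^T$ preserves the Haar measure on $\mathrm{GL}(n,\R)$ (it permutes the Lebesgue coordinates on the space of $n\times n$ matrices and $\abs{\det{A^T}}=\abs{\det{A}}$), this gives an MSF wavelet for $(A,\Gamma)$ for a.e.\ $A$ with $\abs{\det{A}}>1$. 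For $\abs{\det{A}}<1$ one observes that $\af{\set{\psi},A,\Gamma}=\af{\set{\psi},A^{-1},\Gamma}$ as a set (reindex $j\mapsto-j$), so an MSF wavelet for $(A^{-1},\Gamma)$ is one for $(A,\Gamma)$; since $\abs{\det{A^{-1}}}>1$ and the maps $A\mapsto A^{-1}$ and $A\mapsto A^T$ both preserve the Haar measure on $\mathrm{GL}(n,\R)$, the exceptional set inside $\set{\abs{\det{A}}<1}$ is null. As $\set{\abs{\det{A}}=1}$ is Haar-null too, an MSF wavelet exists for $(A,\Gamma)$ for a.e.\ $A\in\mathrm{GL}(n,\R)$.

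The substantive work is all contained in Theorem~\ref{thm:ubl}, Corollary~\ref{cor:ubl}, and \cite[Theorem 2.5]{IW}; the remaining argument is bookkeeping, and the only points that call for care are: (a) checking that the hypothesis of \cite[Theorem 2.5]{IW} is exactly the Fourier-side estimate \eqref{eq:lca} for $(A^T,\Gamma^{\ast})$, rather than a formally different wavelet-set condition (if needed, one can instead route through Theorem~\ref{ch} applied to $\hat\psi=\chi_W$ for a suitable wavelet set $W$); (b) verifying that dualization on $\mathcal L_n$, as well as transposition and inversion on $\mathrm{GL}(n,\R)$, preserve the relevant invariant measures, so that full-measure sets are carried to full-measure sets; and (c) in part (iii), passing from $\abs{\det{A}}>1$ to all of $\mathrm{GL}(n,\R)$ using the $A\leftrightarrow A^{-1}$ symmetry of the wavelet system together with the Haar-nullity of $\set{\abs{\det{A}}=1}$. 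I expect (a) to be the real subtlety, since everything downstream depends on quoting \cite{IW} in exactly the right form.
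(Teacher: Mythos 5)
There is a genuine gap, and it sits exactly where you suspected in your point (a). The hypothesis of \cite[Theorem 2.5]{IW} is not the lattice counting estimate \eqref{eq:lca}: it is a translational \emph{packing} condition, namely that for infinitely many $j\in\N$ the set $B^{-j}(\B(0,r/2))$ packs $\R^n$ translationally by the dual lattice, which is equivalent to the trivial intersection $\Gamma^*\cap B^{-j}(\B(0,r))=\{0\}$. The lattice counting estimate only gives $\cardbig{\Gamma^*\cap B^{j}(\B(0,r))}\le C$ for $j<0$, and a bound by a constant $C$ does not force that constant to be $1$; one cannot rule out, from the statement of Theorem~\ref{thm:ubl} or Corollary~\ref{cor:ubl} alone, that every $B^{j}(\B(0,r))$ with $j<0$ contains some nonzero lattice point. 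The paper closes this gap by going back into the \emph{proof} of Theorem~\ref{thm:ubl} and invoking the stronger conclusion \eqref{ubl10} established there: for a.e.\ $U\in\mathrm{SO}(n)$ one actually has $U\Lambda^*\cap B^{j}(\B(0,r))=\{0\}$ for all $j<-j_0$, which is precisely the packing hypothesis of \cite[Theorem 2.5]{IW}. Your fallback of routing through Theorem~\ref{ch} with $\hat\psi=\chi_W$ does not repair this, since Theorem~\ref{ch} only \emph{characterizes} when a given $\psi$ is a Parseval wavelet; it does not produce the wavelet set $W$, whose existence is the whole point.

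This also means parts (ii) and (iii) cannot be run directly off Corollary~\ref{cor:ubl} as you propose, since that corollary only records the counting estimate. The paper instead deduces (ii) from (i) by applying Lemma~\ref{nul} to the property ``an MSF wavelet exists for $(A,U\Lambda)$,'' and (iii) from (ii) by the same Fubini/conjugation argument as in the proof of Corollary~\ref{cor:ubl}(ii), using that MSF wavelets for $(A,\LG)$ and $(P^{-1}AP,P^{-1}\LG)$ exist simultaneously. Your measure-theoretic bookkeeping --- dualization preserving $\mu_{\mathcal L}$, transposition and inversion preserving Haar measure on $\mathrm{GL}(n,\R)$, and the $A\leftrightarrow A^{-1}$ symmetry of the wavelet system to cover $\abs{\det{A}}<1$ (a point the paper glosses over and you handle more explicitly) --- is sound and in the spirit of the paper, but it is applied to the wrong input: you need the almost-everywhere trivial-intersection property, not the almost-everywhere counting estimate.
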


\begin{proof}
To prove part (i) let $\Ga=\Lambda^*$.
The proof of Theorem~\ref{thm:ubl} shows that for some sufficiently
large $j_0=j_0(U,\Ga,r)>0$, the trivial intersection property
\eqref{ubl10} holds for all $r>0$ and for a.e. $U \in
\mathrm{SO}(n)$. In particular, for any $r>0$, there are infinitely
many $j\in \N$ such that $B^{-j}(\B(0,r/2))$ packs translationally by
$U\Lambda^*$. By \cite[Theorem 2.5]{IW}, there exists a set $W\subset
\R^n$ such that $W$ tiles $\R^d$ multiplicatively by $B$ and
translationally by $U\La^*$. In other words, $W$ is a wavelet set associated with the dilation $B$ and the lattice $U\La^*$. Thus, $\psi \in L^2(\Rn)$, defined by $\hat \psi=|W|^{-1/2} \ch_{W}$, is an MSF wavelet associated with $(A,U\La)$, where $B=A^T$ and $(U\La)^*=U\La^*$. This shows part (i).

Part (ii) follows then from Lemma~\ref{nul}. To show part (iii) observe that 
\[
\mu_{GL}(\{A \in \mathrm{GL}(n,\R): \abs{\det{A}}=1 \})=0,
\]
so it is enough to show the existence of MSF wavelets for almost every $A\in \mathrm{GL}(n,\R)$ with $\abs{\det{A}}>1$. Then (iii) is deduced from (ii) along the same lines as the proof of Corollary~\ref{cor:ubl}(ii) using the observation that there exists an MSF wavelets associated with $(A,\LG)$ if and only if it exists for $(P^{-1}AP,P^{-1}\LG)$ for any $P \in \mathrm{GL}(n,\R)$. 
\end{proof}

\bibliographystyle{amsplain}

\end{document}